\newcommand{\oeis}[1]{\href{https://oeis.org/#1}{#1}}
\newcommand{\shadetheboxesPM}[1]{
    \foreach \x/\y in {#1}
    \fill[pattern color = black!75, pattern=north east lines] (\x,\y) rectangle +(1,1);
}
\newcommand{\drawthegridstrule}[1]{
    \foreach \x/\y in {#1}
    \draw[line width=2] (0,0) grid (\x,\y);
}
\newcommand{\drawverticallines}[3]{
    \foreach \x in {#2}
    \draw[line width=#3] (\x+0.01,0.01) -- (\x+0.01,#1+0.99);
}
\newcommand{\drawhorizontallines}[3]{
    \foreach \y in {#2}
    \draw[line width=#3] (0.01,\y+0.01) -- (#1+0.99,\y+0.01);
}
\newcommand{\drawclpattern}[2]{
	\foreach[count=\x] \y in {#1}
	{
		\filldraw (\x,\y) circle (#2 pt);
	}
}
\newcommand{\drawspecialbox}[1]{
    \foreach \x/\y/\z/\w/\A in {#1}
    {
        \fill[color = white!100, opacity=1, rounded corners = 1.5pt] (\x+0.125,\y+0.125) rectangle (\z-0.125,\w-0.125);
        \draw[color = black, rounded corners = 1.5pt] (\x+0.125,\y+0.125) rectangle (\z-0.125,\w-0.125);
        \fill[black] (\x/2+\z/2,\y/2+\w/2) node {\A};
    }
}
\newcommand{\drawspecialboxcorner}[1]{
    \foreach \x/\y/\A in {#1}
    {
        \fill[color = white!100, opacity=1, rounded corners = 1.5pt] (\x+0.125,\y+0.125) rectangle (\x+1-0.125,\y+1-0.125);
        \draw[color = black, rounded corners = 1.5pt] (\x+0.125,\y+0.125) rectangle (\x+1-0.125,\y+1-0.125);
        \fill[black] (\x+1/2,\y+1/2) node {\A};
    }
}
\newcommand{\drawspecialboxlarge}[1]{
    \foreach \x/\y/\z/\w/\A in {#1}
    {
        \fill[color = white!100, opacity=1, rounded corners = 1.5pt] (\x+0.125,\y+0.125) rectangle (\z-0.125,\w-0.125);
        \draw[color = black, rounded corners = 1.5pt] (\x+0.125,\y+0.125) rectangle (\z-0.125,\w-0.125);
        \fill[black] (\x/2+\z/2,\y/2+\w/2) node {\Large \A};
    }
}
\newcommand{\drawsolidshadedbox}[1]{
    \foreach \x/\y/\z/\w/\A in {#1}
    {
        \fill[color = gray!50, opacity=1, rounded corners=1.5pt] (\x+0.125,\y+0.125) rectangle (\z-0.125,\w-0.125);
        \draw[color = black, rounded corners=1.5pt] (\x+0.125,\y+0.125) rectangle (\z-0.125,\w-0.125);
        \fill[black] (\x/2+\z/2,\y/2+\w/2) node {\A};
    }
}
\newcommand{\drawlabels}[1]{
	\foreach \x/\y/\lab in {#1}
	{
		\draw (\x + 0.5,\y + 0.5) node {\lab};
	}
}
\newcommandx{\patt}[9][4={},5={},6={},7={},8={},9=4]
{
	\scalebox{#1}
	{
		\begin{tikzpicture}[baseline=(current bounding box.center)]
			\useasboundingbox (0.0,-.3) rectangle (#2+1,#2+1.3);
			\shadetheboxesPM{#4}
			\draw (0.01,0.01) grid (#2+1-0.01,#2+1-0.01);

			\drawsolidshadedbox{#6}
			\drawspecialbox{#7}
			\drawspecialboxlarge{#5}
			\drawclpattern{#3}{#9}
			\drawlabels{#8}
		\end{tikzpicture}
	}
}
\newcommandx{\cpatt}[8][4={},5={},6={},7={},8={}]
{
	\scalebox{#1}
	{
		\begin{tikzpicture}[baseline=(current bounding box.center)]
			\useasboundingbox (0.0,-.3) rectangle (#2+1,#2+1.3);
			\shadetheboxesPM{#4}
			\draw (0.01,0.01) grid (#2+1-0.01,#2+1-0.01);

			\drawsolidshadedbox{#6}
			\drawspecialbox{#7}
			\drawspecialboxlarge{#5}
			\drawclpattern{#3}{4}

			\foreach \x/\y in {#8}
			{
				\draw[line width=1] (\x,\y) circle (7 pt);
			}
		\end{tikzpicture}
	}
}
\newcommandx{\metapatt}[8][6={},7={},8={}]
{
    \scalebox{#1}
    {
        \begin{tikzpicture}[baseline=(current bounding box.center)]
					\foreach \width/\height in {#2}
					{
						\useasboundingbox (0.0,-.3) rectangle (\width+1,\height+1.3);
            \shadetheboxesPM{#6}

            \foreach \pos/\type in {#4}
            {
                \ifthenelse{\equal{\type}{v}}
                {
                    \drawverticallines{\height}{\pos}{1.7pt}
                }
                {
								    \ifthenelse{\equal{\type}{d}}
                    {
                      \draw[densely dashed] (\pos,0) -- (\pos,\height+1);
                    }
										{
											\drawhorizontallines{\width}{\pos}{1.7pt}
										}
                }
            }

            \foreach \pos/\type in {#3}
            {
                \ifthenelse{\equal{\type}{v}}
                {
                    \drawverticallines{\height}{\pos}{0.6pt}
                }
                {
										\drawhorizontallines{\width}{\pos}{0.6pt}
                }
            }

            \drawsolidshadedbox{#8}
            \drawspecialbox{#7}

            \foreach \x/\y/\type in {#5}
            {
                \ifthenelse{\equal{\type}{a}}
                {
                    \draw (\x,\y) circle (6pt);
                    \filldraw (\x,\y) circle (3pt);
                }
                {
                    \filldraw (\x,\y) circle (4pt);
                }
            }
					}
        \end{tikzpicture}
    }
}
\newcommandx{\strule}[3][3={}]
{
    \scalebox{#1}
    {
        \begin{tikzpicture}[baseline=(current bounding box.center)]
            \drawthegridstrule{#2}
            \drawspecialboxcorner{#3}
        \end{tikzpicture}
    }
}
\newcommandx{\dpatt}[9][6={},7={},8={},9={}]
{
    \scalebox{#1}
    {
        \begin{tikzpicture}[baseline=(current bounding box.center)]
					\foreach \width/\height in {#2}
					{
						\useasboundingbox (0.0,-.3) rectangle (\width+1,\height+1.3);
            \shadetheboxesPM{#6}

            \foreach \pos/\type in {#4}
            {
                \ifthenelse{\equal{\type}{v}}
                {
                    \drawverticallines{\height}{\pos}{1.7pt}
                }
                {
								    \ifthenelse{\equal{\type}{d}}
                    {
                      \draw[densely dashed] (\pos,0) -- (\pos,\height+1);
                    }
										{
											\drawhorizontallines{\width}{\pos}{1.7pt}
										}
                }
            }

            \foreach \pos/\type in {#3}
            {
                \ifthenelse{\equal{\type}{v}}
                {
                    \drawverticallines{\height}{\pos}{0.6pt}
                }
                {
										\drawhorizontallines{\width}{\pos}{0.6pt}
                }
            }

            \drawsolidshadedbox{#8}
            \drawspecialbox{#7}

            \foreach \x/\y/\type in {#5}
            {
                \ifthenelse{\equal{\type}{a}}
                {
                    \draw9 (\x,\y) circle (6pt);
                    \filldraw (\x,\y) circle (3pt);
                }
                {
                    \filldraw (\x,\y) circle (4pt);
                }
            }

						\drawlabels{#9}
					}
        \end{tikzpicture}
    }
}
\newcommandx{\shpatt}[7][4={},5={},6={},7={}]
{
	\scalebox{#1}
	{
		\begin{tikzpicture}[baseline=(current bounding box.center)]
			\useasboundingbox (0.0,-.3) rectangle (#2+1,#2+1.3);
			\shadetheboxesPM{#4}
			\draw (0.01,0.01) grid (#2+1-0.01,#2+1-0.01);

			\drawclpattern{#3}{4}

			\foreach \x/\y in {#5}
			{
				\draw[line width=1] (\x,\y) circle (7 pt);
			}

            \foreach \x/\y in {#6}
            {
                \draw[densely dashed, line width=1.7pt] (\x,0) -- (\x,#2+1);
                \draw[densely dashed, line width=1.7pt] (0,\y) -- (#2+1,\y);
            }

            \foreach \xa/\ya/\xb/\yb in {#7}
            {
                \draw[->, line width=1.7pt] (\xa,\ya) -- (\xb-0.12,\yb-0.12);
            }
		\end{tikzpicture}
	}
}
\pgfmathsetmacro{\patttablescale}{1.05}
\pgfmathsetmacro{\pattdispscale}{0.60}
\pgfmathsetmacro{\patttextscale}{0.3}
\pgfmathsetmacro{\struleds}{0.50} 
\theoremstyle{plain}
\newtheorem{theorem}{Theorem}
\newtheorem{proposition}{Proposition}
\theoremstyle{definition}
\newtheorem{question}{Question}
\newcommand{\av}{\mathrm{Av}}
\newcommand{\grid}{\mathrm{Grid}}
\newcommand{\struct}{\textsf{Struct}}
\newcommand{\subclasses}[1]{\mathbb{M}(#1)}
\newcommand{\subpatterns}{\Delta}  
\newcommand{\onebox}{\strule{\struleds}{1/1}[]}
\newcommand{\mcA}{{\mathcal{A}}}
\newcommand{\mcEr}{{\mcE^\mathrm{r}}}
\newcommand{\mcEi}{{\mcE^\mathrm{i}}}
\newcommand{\mcC}{{\mathcal{C}}}
\newcommand{\mcD}{{\mathcal{D}}}
\newcommand{\mcE}{{\mathcal{E}}}
\newcommand{\mcF}{{\mathcal{F}}}
\newcommand{\mcG}{{\mathcal{G}}}
\newcommand{\mcH}{{\mathcal{H}}}
\newcommand{\mcI}{{\mathcal{I}}}
\newcommand{\mcS}{{\mathcal{S}}}
\newcommand{\mcR}{{\mathcal{R}}}
\newcommand{\mcV}{{\mathcal{V}}}
\newcommand{\incr}{{\mathcal{I}}}
\newcommand{\decr}{{\mathcal{D}}}
\title{Automatic discovery of structural rules of permutation classes} %
\author[Christian Bean \and Bjarki Gudmundsson \and Henning Ulfarsson]{Christian Bean \and Bjarki Agust Gudmundsson \and Henning Ulfarsson}
\begin{document}

\begin{abstract}

    We introduce an algorithm that conjectures the structure of a permutation
    class in the form of a disjoint cover of ``rules''; similar to generalized
    grid classes. The cover is usually easily verified by a human and translated
    into an enumeration. The algorithm is successful on different inputs than
    other algorithms and can succeed with any polynomial permutation class. We
    apply it to every non-polynomial permutation class avoiding a set of length
    four patterns. The structures found by the algorithm can sometimes allow an
    enumeration of the permutation class with respect to permutation statistics,
    as well as choosing a permutation uniformly at random from the permutation
    class. We sketch a new algorithm formalizing the human verification of the
    conjectured covers.

\end{abstract}

\maketitle

\section{Introduction}
  \label{sec:introduction}

There has been a movement towards a generalized approach for enumerating
permutation classes, which are sets of permutations defined by the avoidance of
permutation patterns\footnote{We define these precisely at the end of the
introduction.}.

One of the first completely automatic approaches was enumeration schemes
introduced by \cite{originalenumerationschemes} and considerably extended by
\cite{enumerationschemes}. The goal of enumeration schemes is to break up a
permutation class into smaller parts and find recurrence relations. There is
no general theory for when a permutation class has an enumeration scheme.

The insertion encoding is an encoding of finite permutations, introduced by
\cite{insertionencoding}. It encodes how a permutation is built up by
iteratively adding a new maximum element. In particular, they studied the
permutation classes whose insertion encodings are regular languages, including
giving a characterization of these permutation classes. For regular insertion
encodable permutation classes, \cite{practicalinsertionencoding} provides an
algorithm for automatically computing the rational generating function.

\cite{substitutiondecomposition} introduced the enumeration of a set of
permutations by inflating the simple permutations in the set. A basis is not
required but the set under inspection must contain only finitely many simple
permutations. Checking whether this condition is satisfied is in general
exponentially hard, but for wreath-closed permutation classes an $O(n \log n)$
algorithm exists, \cite{testsimple}. The whole procedure has not been
implemented.

In the case of polynomial permutation classes\footnote{A permutation class
$\mcC$ is said to be \emph{polynomial} if the number of length $n$ permutations,
$|\mcC_n|$, is given by a polynomial for all sufficiently large $n$.}, it was
shown by \cite{polynomialenumeration}, by combining results by
\cite{geometricgridclasses} and \cite{fibonaccistructure}, that each permutation
class can be represented by a finite set of peg permutations\footnote{We discuss
peg permutations in Section~\ref{sec:polynomial_bases}}. From this finite set of
peg permutations, \cite{polynomialenumeration} give an automatic method to
enumerate polynomial permutation classes. However, it is not known in general how
to find the set of peg permutations.

We introduce \struct{}, an algorithm which takes as input a set of permutations
or a permutation property and conjectures certain structural rules that can
lead to a generating function enumerating the input. We focus specifically on
permutation classes and assume a finite basis is known. We emphasize if the
algorithms mention above succeed, then they output a proof, whereas our
algorithm only outputs conjectures. We note however that the produced
conjectures usually turn out to be easily verified by a human.

As a first example, consider the set $\mcA$ of permutations avoiding $231$,
which are the stack-sortable permutations, first considered by \cite{knuth}. It
is well known that a non-empty permutation in $\mcA$ can be written as $\alpha n
\beta$ where $n$ is the largest element in the permutation, $\alpha$ and $\beta$
avoid $231$, and every element in $\alpha$ is smaller than every element in
$\beta$. We can state the structure as a disjoint union of the two rules in
Figure~\ref{fig:231}.
    \begin{figure}[ht]
        \centering
        \[
            \mcA =  \onebox \sqcup \strule{\struleds}{3/3}[0/0/$\mcA$,1/2/$\bullet$,2/1/$\mcA$]. 
        \]
        \caption{The structure of $\av(231)$.}
        \label{fig:231}
    \end{figure}
The meaning of this representation is that a permutation is either empty, and
represented by the empty rule, or, it is non-empty and represented by the
$3 \times 3$ rule. That rule is a recipe that says: given two permutations in
$\mcA$, we can generate a longer permutation in $\mcA$ by placing the two
permutations in the cells marked with $\mcA$ and placing a new largest element
between them, e.g.~$213$ and $1234$ will generate the permutation $21384567$.
This leads immediately to the well-known functional equation for the Catalan
generating function, $A(x) = 1 + x A(x)^2$.

For the remainder of the introduction, we give the definitions used throughout.
In Section~\ref{sec:the_algorithm} we discuss precisely how the algorithm works.
In Sections~\ref{sec:patterns_of_length_three} and
\ref{sec:one_pattern_of_length_three_and_one_of_length_four} we apply \struct{}
to specific permutation classes. In Section~\ref{sec:polynomial_bases} we
compare \struct{} to the other existing approaches mentioned above and show it
will succeed for all polynomial permutation classes. In
Section~\ref{sec:large_bases_of_length_four_patterns} we apply it to the bases
$B \subseteq \mcS_4$. In Section~\ref{sec:further_improvements} we consider
improvements of the algorithm, relating to permutation statistics, uniform
random choice from a permutation class, and how the conjectures can be turned
automatically into theorems.

A \emph{permutation} of length $n$ is a word on $[n] = \{1,2, \dots, n\}$ that
contains each letter exactly once. Let $\mcS_n$ be the set of all permutations
of length $n$ and $\mcS = \bigcup_{n \geq 0} \mcS_n$. Two words of the same
length, $w = w_1 w_2 \dots w_k$ and $v = v_1 v_2 \dots v_k$, are \emph{order
isomorphic} if for all $i$ and $j$, $w_i < w_j$ if and only if $v_i < v_j$. A
permutation $p$ is \emph{contained} in a permutation $\pi$, denoted $p \preceq
\pi$, if there is a subword in $\pi$ that is order isomorphic to $p$. If $\pi$
does not contain $p$ then we say $\pi$ \emph{avoids} $p$. In this context, $p$ is
referred to as a (\emph{classical permutation}) \emph{pattern}.

Define the set $\subpatterns(\pi)$ to be the set of all patterns contained in
$\pi$,
\[
  \subpatterns(\pi) = \{ p \in \mcS : p \preceq \pi \}.
\]
For a set of patterns $P$, let
$\subpatterns(P) = \bigcup_{\pi \in P} \subpatterns(\pi)$.
A \emph{permutation class} is a set $\mcC$ such that $\subpatterns(\mcC)$ equals
$\mcC$. For a set of patterns $P$, the set
\[
  \av(P) = \{ \pi \in \mcS : \pi \text{ avoids all } p \in P \}
\]
is a permutation class. Given a permutation class $\mcC = \av(B)$ we say that a
set of patterns $B$ is a \emph{basis} of $\mcC$ if $\subpatterns(p) \cap B = \{ p \}$
for each $p \in B$. It is easy to see that every permutation class has a
(possibly infinite) basis. If $|B| = 1$ then we call $\av(B)$ a \emph{principal}
permutation class.

For a permutation class $\mcC = \av(P)$ we denote the set of all length $n$
permutations in $\mcC$ as $\mcC_n$ or $\av_n(P)$. We also let $\mcC_{\leq n}$
and $\av_{\leq n}(P)$ be the set of permutations of length at most $n$ in the
permutation class. Two sets of patterns $P$ and $P'$ are \emph{Wilf-equivalent}
if $|\av_n(P)| = |\av_n(P')|$ for all $n$.

\section{The algorithm}
  \label{sec:the_algorithm}

Before describing how the algorithm works we give a few more definitions. A
\emph{block} of a permutation class $\mcC = \av(B)$ is a permutation class
$\mcC' = \av(B')$, containing infinitely many permutations, such that $B'
\subseteq \subpatterns(B)$ and $B' \cap \subpatterns(p) \neq \emptyset$ for all
$p$ in $B$. We also allow the finite permutation class $\av(1) = \{\epsilon\}$
as a block for any permutation class $\mcC$. Additionally, if $1 \in \mcC$ we
allow $\{1\}$ as a block, even though this is not a permutation class, and
contains only one permutation. We call $1$ ``the point'' and denote it with
$\bullet$.

The \emph{block set}, $\subclasses{B}$, of $\mcC$ is the set of all blocks of
$\mcC$. We note that for a finite basis $B$ the block set of the permutation
class $\av(B)$ is always finite. For example,
\[
\subclasses{ \{231\} } = \{
\av(1), \{ \bullet \}, \av(12), \av(21), \av(231) \}.
\]
Note that $\av(12, 21)$ is not a block since it is finite.\\

Before we assemble blocks into ``rules'' we review generalized grid classes
introduced by \cite{generalizedgridclasses}. Given a permutation $\pi$ of length
$n$, and two subsets $X,Y \subseteq [n]$, then $\pi(X \times Y)$ is the
permutation that is order isomorphic to the subword with indices from $X$ and
values in $Y$. For example $35216748([3,7] \times [2,6]) = 132$, from the
subword $264$.

Suppose $M$ is a $t \times u$ matrix (indexed from left to right and bottom to
top ) whose entries are permutation classes. An \emph{$M$-gridding} of a
permutation $\pi$ of length $n$ is a pair of sequences $1 = c_1 \leq \dots \leq
c_{t+1} = n+1$ and $1 = r_1 \leq \dots \leq r_{u+1} = n+1$ such that
$\pi([c_k,c_{k+1}) \times [r_l,r_{l+1}))$ is in $M_{k,l}$ for all $k$ in $[t]$
and $\ell$ in $[u]$. The \emph{generalized grid class} of $M$, $\grid(M)$,
consists of all permutations with an $M$-gridding.

A (\emph{\struct{}}) \emph{rule} $\mcR$ is a matrix whose entries are
permutation classes, or the point $\bullet$, with the requirement that each
permutation $\pi$ in the grid class $\grid(\mcR)$ has a unique $\mcR$-gridding.
From now on we will abuse notation and use $\mcR$ to denote both the \struct{}
rule and its grid class. Let $\mcR_{\leq n}$ be the set of permutations in
$\mcR$ of length at most $n$.

For a given basis $B$, whose longest pattern is from $S_\ell$, \struct{} tries
to find the structure of the permutation class $\mcC = \av(B)$ in terms of
rules. Using the default settings, the algorithm consists of four main steps
\begin{enumerate}
  \item Generate the block set\footnote{Candidates for blocks are experimentally checked
  for being infinite: If a block is non-empty up to length $\ell+2$ it is kept.} of the given basis $B$.

  \item Generate \struct{} rules\footnote{Candidates for \struct{} rules are
  experimentally checked to not create duplicates up to length $\ell+2$.},
  $\mcR_1, \mcR_2, \dots, \mcR_K$ up to size $(\ell+1) \times (\ell+1)$ with
  entries from the block set, satisfying $\mcR_{\leq \ell+2} \subseteq \av(B)$.
  The trivial rule $\strule{\struleds}{1/1}[0/0/$\mcC$]$ is discarded.

  \item Try to find a \emph{cover} of $\av_{\leq \ell+2}(B)$ with the rules from
  the previous step, i.e., write this set as a disjoint union
  \[
    \av_{\leq \ell+2}(B) = \mcR_{i_1,\leq \ell+2} \sqcup \dots \sqcup \mcR_{i_k,\leq \ell+2}.
  \]
  \item If a cover is found, verify that it remains valid in length $ \leq \ell+4$, i.e.,
  \[
    \av_{\leq \ell+4}(B) = \mcR_{i_1,\leq \ell+4} \sqcup \dots \sqcup \mcR_{i_k,\leq \ell+4}.
  \]
\end{enumerate}
By providing different settings, the user can make the algorithm look for a
cover with larger rules, or verify a found cover for longer permutations.

As a simple example consider the decreasing permutations $\decr = \av(12)$,
where $\ell = 2$. Step $(1)$ finds the block set
$\subclasses{ \{12\} } = \{ \av(1), \{ \bullet \}, \av(12) \}$.
Step $(2)$ generates the rules
    \[
        \onebox, \strule{\struleds}{2/2}[0/1/$\bullet$,1/0/$\bullet$],
                 \strule{\struleds}{2/2}[0/1/$\decr$,1/0/$\bullet$],
                 \strule{\struleds}{2/2}[0/1/$\bullet$,1/0/$\decr$], \dotsc,
                 \strule{\struleds}{3/3}[0/2/$\bullet$,1/1/$\bullet$,2/0/$\decr$].
    \]
One of the covers found in step $(3)$ is
    \[
      \decr_{\leq 4} = \onebox \sqcup
      \strule{\struleds}{2/2}[0/1/$\decr$,1/0/$\bullet$]_{\leq 4}.
    \]
Step $(4)$ verifies this cover up to length $6$. At this stage, a human must
step in and verify that the cover remains valid for all lengths. In this case,
it is obvious, and if $D(x)$ is the generating function then we can see from the
cover that $D(x) = 1 + xD(x)$, so $D(x) = \frac{1}{1-x}$.

If the block set is large then the space of possible rules to consider in step
(2) will be too large to exhaustively search for valid rules. To prune the
search space we first check which blocks can share a row, column or diagonal
without creating a pattern from the basis. This is used to recursively build the
candidate \struct{} rules without considering every possibility. We also arrange
the blocks in a poset where the relation is set containment. This is also used
when creating the \struct{} rules; if a rule with a block $A$ in a certain cell
produces the same permutation twice, or a permutation outside of $\av(B)$ then
replacing $A$ with a block $A' \supset A$ will also not work. Finally, in step
(3) we use the integer linear programming solver Gurobi (\cite{gurobi}) to find
a minimal cover if one exists; resorting to the SAT solver Lingeling
(\cite{lingeling}) if Gurobi runs out of memory.

The implementation of \struct{} can be found on GitHub, \cite{structrepo}. The
conjectures that have been discovered can be found on the Permutation Pattern
Avoidance Library (or PermPAL for short), \cite{permpal}, alongside the
conjectured enumerations.

\section{Patterns of length three}
  \label{sec:patterns_of_length_three}

As we saw in the previous section the decreasing permutations, $\mcD = \av(12)$,
have a cover from which we can read the generating function, $D(x) =
\frac{1}{1-x}$. By symmetry the increasing permutations $\mcI = \av(21)$ have
the same generating function, which we denote $I(x)$. Throughout we will use the
font $\mcC$ to denote a permutation class and $C(x)$ to denote its corresponding
generating function.

In this section, we consider selected bases $B \subseteq \mcS_3$. All of the
results in this section are known, see e.g., \cite{restrictedpermutations}. We
include them here as we will need to reference them for permutation classes
considered later. For the bases not mentioned in this section there exists a
cover using at most $3 \times 3$ rules, with the exception of $\{ 123 \}$, which
will be treated in Section~\ref{sec:further_improvements}. Our approach has the
appeal of being automatic: Every cover is conjectured by \struct{} and easy for
a human to prove. Furthermore, reading the generating function from the cover is
a routine exercise. Where possible the OEIS entry is referenced, \cite{OEIS}.

We already have from Figure~\ref{fig:231} that $\mcA = \av(231)$ has a
cover that leads to the Catalan generating function, that is $A(x) = \frac{1 -
\sqrt{1-4x}}{2x}$.

\subsection*{Two patterns}
We start by considering two patterns of length three.

\begin{proposition}[\cite{restrictedpermutations}, Proposition 11]
    The structure of the permutation class $\mcE_2 = \av(123, 231)$ is given by
    the cover in Equation~\eqref{eq:123_231}. The enumeration is given by
    \oeis{A152947} and the generating function is
    \begin{align*}
        E_2(x) = \frac{1 - 2x + 2x^2}{(1-x)^3}
               = 1 &+ x + 2x^2 + 4x^3 + 7x^4 + 11x^5 + 16x^6\\
                   &+ 22x^7 + 29x^8 + 37x^9 + 46x^{10} + \dotsb.
    \end{align*}
\end{proposition}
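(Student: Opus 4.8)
The plan is to check directly that the three rules displayed in \eqref{eq:123_231} form a disjoint cover of $\mcE_2 = \av(123,231)$, and then to read the generating function off that cover exactly as was done for $\av(231)$. The structural content is a decomposition of a nonempty $\pi \in \mcE_2$ according to the position of its largest entry $n$: writing $\pi = \alpha\, n\, \beta$, either $n$ is in the first position (so $\alpha = \epsilon$) or it is not.

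If $n$ is in the first position, then $\pi = n\beta$. Since $n$ is the global maximum it can occur in a $123$ only as the last entry and in a $231$ only as the middle entry, and neither is possible when $n$ comes first; hence $n\beta \in \mcE_2$ if and only if $\beta \in \mcE_2$. These are precisely the permutations governed by the rule placing a point above and to the left of a cell labelled $\mcE_2$, and in that rule the gridding is forced because the point must occupy the first position and the maximum value.

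If $n$ is not in the first position, then $\alpha \neq \epsilon$. Avoiding $231$ forces every entry of $\alpha$ to lie below every entry of $\beta$: if some entry $a$ of $\alpha$ exceeded some entry $b$ of $\beta$, then $(a,n,b)$ would be a $231$. Avoiding $123$ then forces $\alpha$ to be decreasing — an ascent $a_i < a_j$ in $\alpha$ together with $n$ is a $123$ — and, since $\alpha$ is nonempty and lies entirely below $\beta$, an ascent in $\beta$ would complete a $123$ with an entry of $\alpha$, so $\beta$ is decreasing as well. Conversely, any $\alpha\, n\, \beta$ with $\alpha$ a nonempty decreasing permutation, $\beta$ a decreasing permutation, and all of $\alpha$ below all of $\beta$ below $n$, has every increasing subsequence of length at most $2$ (at most one entry from $\alpha$, at most one from $\beta$, never together with $n$), hence avoids $123$, and a one-line check — using that the entries below $n$ split into the block $\alpha$ to the left of the block $\beta$ with $\alpha < \beta$ — shows it avoids $231$; so it lies in $\mcE_2$. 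These are exactly the permutations governed by the third rule, in which the nonempty decreasing factor $\alpha$ is recorded by a point together with a cell labelled $\mcD = \av(12)$. The three pieces are pairwise disjoint, being distinguished by having length $0$, by having the maximum in the first position, and by having the maximum in a later position, and they clearly exhaust $\mcE_2$; so \eqref{eq:123_231} is an honest disjoint cover.

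It then remains to extract the generating function, recalling that the decreasing permutations have generating function $D(x) = \frac{1}{1-x}$. The empty rule contributes $1$; the rule $\pi = n\beta$ contributes $x E_2(x)$; and the third rule, being a nonempty decreasing permutation followed by a point followed by a decreasing permutation, contributes $\frac{x}{1-x}\cdot x\cdot\frac{1}{1-x} = \frac{x^2}{(1-x)^2}$. Hence $E_2(x) = 1 + x E_2(x) + \frac{x^2}{(1-x)^2}$, which solves to $E_2(x) = \frac{1}{1-x} + \frac{x^2}{(1-x)^3} = \frac{1-2x+2x^2}{(1-x)^3}$, whose coefficient of $x^n$ is $1 + \binom{n}{2}$, matching \oeis{A152947}. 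I do not expect a genuinely hard step here; the only place needing care is the bookkeeping around the third rule, which must be enlarged (to a $4 \times 4$ rule, encoding ``nonempty decreasing'' by an extra point) precisely so that the family $\alpha\, n\, \beta$ with $\alpha \neq \epsilon$ does not overlap the family $n\beta$ — and then one must confirm this enlarged rule is drawn correctly in \eqref{eq:123_231} and is a legitimate \struct{} rule, i.e.\ has unique griddings, which is immediate because its point cells and top row pin down the column and row splits.
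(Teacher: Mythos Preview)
Your proof is correct and follows essentially the same approach as the paper: decompose a nonempty $\pi\in\mcE_2$ around its maximum using the $231$-avoider structure, split into the cases $\alpha=\epsilon$ and $\alpha\neq\epsilon$, and read off the functional equation $E_2(x)=1+xE_2(x)+x^2D(x)^2$. You have simply supplied the details that the paper compresses into the phrase ``easily obtained by starting with the structure of $231$-avoiders.''
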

\begin{equation} \label{eq:123_231}
    \onebox \sqcup
    \strule{\struleds}{2/2}[0/1/$\bullet$, 1/0/$\mcE_2$] \sqcup
    \strule{\struleds}{4/4}[0/1/$\bullet$, 2/3/$\bullet$, 1/0/$\decr$, 3/2/$\decr$]
\end{equation}

\begin{proof}
    The structure is easily obtained by starting with the structure of
    $231$-avoiders in Figure~\ref{fig:231}. From this structure we get the
    functional equation $E_2(x) = 1 + xE_2(x) + x^2D(x)^2$. Solving it gives the
    claimed equation for $E_2(x)$.
\end{proof}

We then consider three Wilf-equivalent bases.

\begin{proposition} [\cite{restrictedpermutations}, Propositions 7, 8 and 9] \label{prop:123_132_and_Wilf_eq}
    The structures of the permutation classes $\mcE_{3,1}$, $\mcE_{3,2}$ and
    $\mcE_{3,3}$ are given in Table~\ref{tab:B31_B33}. The enumeration is given
    by \oeis{A011782} and the generating function is
    \begin{align*}
        E_{3,1}(x) = \frac{1-x}{1-2x}
        = 1 &+ x + 2x^2 + 4x^3 + 8x^4 + 16x^5 + 32x^6\\
            &+ 64x^7 + 128x^8 + 256x^9 + 512x^{10} + \dotsb.
    \end{align*}
\end{proposition}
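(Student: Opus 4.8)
The plan is to follow the template of Figure~\ref{fig:231} and of the proposition on $\mcE_2 = \av(123,231)$: start from the disjoint covers that \struct{} conjectures for $\mcE_{3,1},\mcE_{3,2},\mcE_{3,3}$ (displayed in Table~\ref{tab:B31_B33}), verify by hand that each one is a genuine cover valid for \emph{every} length, and then read the generating function straight off the resulting functional equation. I expect every one of these covers to consist of the empty rule $\onebox$ together with a single $3\times 3$ rule in which one cell carries a monotone block ($\decr$ or $\incr$), one cell carries the point $\bullet$, and one cell carries the class itself; for instance, for $\mcE_{3,1} = \av(123,132)$ I expect the rule
\[
    \strule{\struleds}{3/3}[0/1/$\decr$, 1/2/$\bullet$, 2/0/$\mcE_{3,1}$],
\]
which places a decreasing run in the upper-left, the new maximum at the top, and a recursive copy of the class in the lower-right.

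Concretely, I would first establish the structural claim underlying one representative rule, say that for $\av(123,132)$. Writing a nonempty $\pi \in \av(123,132)$ as $\alpha\, n\, \beta$ with $n$ its largest letter, avoidance of $123$ forces $\alpha$ to be order-isomorphic to a decreasing word (any ascent of $\alpha$ would complete to a $123$ together with $n$), and avoidance of $132$ forces every letter of $\alpha$ to exceed every letter of $\beta$ whenever $\alpha$ is nonempty (any letter of $\alpha$ lying below a letter of $\beta$ would complete to a $132$ together with $n$); and of course $\beta \in \av(123,132)$. Conversely any $\alpha, n, \beta$ of this shape assembles into a permutation of $\av(123,132)$ — the only possible occurrences of $123$ or $132$ would have to straddle at least two of the three parts, and a short case check excludes each such straddle — and the decomposition is unique because $n$, and hence the split of positions and of values, is determined by $\pi$. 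This says exactly that the grid class of the rule above equals $\av(123,132)\setminus\{\epsilon\}$ and that every such permutation has a unique gridding, so $\av(123,132)$ is the disjoint union of $\onebox$ with that $3\times 3$ rule. The arguments for $\mcE_{3,2}$ and $\mcE_{3,3}$ are of the same flavour, with the distinguished letter possibly being the first or the smallest entry and the monotone block possibly being $\incr$, according to which two patterns are forbidden; in each case \struct{}'s certificate through length $\ell + 4 = 7$ tells us exactly which configuration to target.

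From a rule of the stated shape the functional equation is immediate: once we know $\grid(\mcR) \subseteq \av(B)$ for all lengths and that the gridding is unique, $\grid(\mcR)$ is enumerated by the product of the generating functions of its cells, so the monotone block contributes $\frac{1}{1-x}$ (it ranges over all monotone words, the empty one included, which absorbs the case $\alpha = \epsilon$), the point contributes $x$, and the recursive cell contributes $E_{3,i}(x)$, giving
\[
    E_{3,i}(x) = 1 + \frac{x}{1-x}\, E_{3,i}(x),
\]
whence $E_{3,i}(x) = \frac{1-x}{1-2x}$, matching \oeis{A011782} and the claimed series. Since all three covers yield the same equation, the classes are Wilf-equivalent, as asserted; alternatively, the common recursive structure produces an explicit size-preserving bijection among the three.

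The only real work, and the step I expect to be the main obstacle, is upgrading \struct{}'s bounded-length certificate to a proof for all $n$. For each of the three bases this means supplying the pattern-avoidance argument that pins down the global shape of an arbitrary permutation in the class, and then checking that no forbidden pattern is created by an occurrence straddling the cells of the rule. None of this is deep, but it must be carried out separately for each basis, and the delicate point is the uniqueness of the gridding — i.e.\ checking that the conjectured rule is an honest \struct{} rule rather than merely a grid class — since that is precisely what makes the union in the cover disjoint.
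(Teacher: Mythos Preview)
Your proposal is correct and follows essentially the same approach as the paper: verify the covers in Table~\ref{tab:B31_B33} and read off the functional equation $E_{3,i}(x) = 1 + xD(x)E_{3,i}(x)$, which is exactly your $E_{3,i}(x) = 1 + \frac{x}{1-x}E_{3,i}(x)$. Your treatment is in fact more detailed than the paper's, which simply states the equation and solves it; the only extra content in the paper is a brief remark on the alternative $2\times 3$ cover for $\mcE_{3,2}$, where the interleaved $\incr$ and $\decr$ blocks contribute $\frac{1}{1-2x}$ directly.
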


\begin{table}[ht]
  \centering
  \begin{tabular}{|l|c|}
      \hline
      $\mcE_{3,1} = \av(123, 132)$ &
      $
      \onebox \sqcup
      \strule{\struleds}{3/3}[0/1/$\decr$, 1/2/$\bullet$, 2/0/$\mcE_{3,1}$]
      $\\
      \hline
      $\mcE_{3,2} = \av(132, 312)$ &
      $
      \onebox \sqcup
      \strule{\struleds}{3/3}[0/1/$\mcE_{3,2}$, 1/2/$\bullet$, 2/0/$\decr$] =
      \onebox \sqcup
      \strule{\struleds}{2/3}[0/1/$\bullet$, 1/0/$\decr$, 1/2/$\incr$]
      $\\
      \hline
      $\mcE_{3,3} = \av(231, 312)$ &
      $
      \onebox \sqcup
      \strule{\struleds}{3/3}[0/0/$\mcE_{3,3}$, 1/2/$\bullet$, 2/1/$\decr$]
      $\\
      \hline
  \end{tabular}
  \caption{The structure of $\av(B)$ where $B$ ranges over the non-symmetric
  bases in a Wilf-class containing two length three patterns}
  \label{tab:B31_B33}
\end{table}

\begin{proof}
  From the cover of $\mcE_{3,1}$ in Table~\ref{tab:B31_B33} we obtain
  $E_{3,1}(x) = 1 + x D(x) E_{3,1}(x)$. Solving produces the claimed equation.
  For $\mcE_{3,2}$ we give two covers, the left-hand cover
  being similar to as above. For the right-hand cover, we have $\mcI$ and $\mcD$
  mixing. This mixing is counted by the generating function $\frac{1}{1-2x}$ and
  we obtain $E_{3,2}(x) = 1 + \frac{x}{1-2x} = E_{3,1}(x)$. The cover of
  $\mcE_{3,3}$ is similar to the one of $\mcE_{3,1}$.
\end{proof}

\subsection*{Three patterns}
The final basis we consider is counted by the Fibonacci numbers.

\begin{proposition}[\cite{restrictedpermutations}, Proposition 15]
    The structure of the permutation class $\mcF = \av(123, 132, 213)$ is
    given by the cover in Equation~\eqref{eq:123_132_213}. They are enumerated
    by the Fibonacci numbers (\oeis{A000045}, shifted) and the generating
    function is
    \begin{align*}
        F(x) = \frac{1}{1-x-x^2}
               =  1 &+ x + 2x^2 + 3x^3 + 5x^4 + 8x^5 + 13x^6\\
                    &+ 21x^7 + 34x^8 + 55x^9 + 89x^{10} + \dotsb.
    \end{align*}
\end{proposition}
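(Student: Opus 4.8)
The plan is to proceed exactly as in the preceding propositions: obtain a \struct{}-style cover of $\mcF = \av(123,132,213)$ by small rules, read off a functional equation, and solve it. First I would work out the block set $\subclasses{\{123,132,213\}}$; since $\mcD = \av(12)$ sits inside every basis element, $\mcD$ will be a block, and the only proper nontrivial subpatterns of length two are $12$ and $21$, so the block set should be $\{\av(1),\{\bullet\},\av(12)\}$ (the class $\av(21)=\mcI$ is \emph{not} a block here, since an increasing pattern of length $3$ is forbidden and $\mcI$ is not contained in $\av(123,\dots)$ beyond length $2$). So the cover will be built from copies of the point and of the decreasing class $\mcD$.

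Next I would describe the structural decomposition combinatorially. A nonempty $\pi \in \mcF$ is either the single point, or it has length $\ge 2$; the key observation is where the largest element $n$ and the element $1$ (or the first/last letter) can go without creating $123$, $132$, or $213$. Avoiding $123$ and $132$ forces the letter $n$ to be in the last two positions; avoiding $213$ then pins things down further, so that $\pi$ decomposes as a decreasing run interacting with a strictly smaller permutation of $\mcF$ in a controlled way. Concretely I expect the cover in Equation~\eqref{eq:123_132_213} to consist of the empty rule, a single point, and one or two rules of the form $\strule{\struleds}{2/2}[0/1/$\bullet$, 1/0/$\mcD$]$ together with a rule gluing a $\bullet$ and a $\mcD$ onto a copy of $\mcF$ — mirroring the Fibonacci recursion $F_n = F_{n-1} + F_{n-2}$. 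I would verify the uniqueness-of-gridding condition for each proposed rule (this is what makes it a legitimate \struct{} rule and hence the union disjoint), then check by hand up to length a few — exactly the verification steps (3) and (4) of the algorithm — that the rules partition $\mcF$.

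From the cover, reading off the generating function is the routine exercise promised in the text: a rule contributing a point and a copy of $\mcD$ contributes $x D(x) = \frac{x}{1-x}$, and a rule that additionally glues on a copy of $\mcF$ contributes $x^2 D(x)^{?} F(x)$ (with the exact powers of $x$ determined by how many point-cells the rule has). Summing the empty rule's $1$ with these terms yields a linear equation $F(x) = 1 + (\text{rational in }x)\,F(x)$, and solving gives $F(x) = \frac{1}{1-x-x^2}$, whose coefficients are the shifted Fibonacci numbers \oeis{A000045}. The main obstacle is not the algebra but nailing the precise shape of the cover — in particular checking that avoiding all three patterns simultaneously really does collapse the structure to these few rules, and confirming the unique-gridding property so the union is genuinely disjoint; once the cover is pinned down, everything else is a short computation. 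Since this is one of the classes \struct{} handles with at most $3 \times 3$ rules, I expect the cover to be short and the human verification to be immediate, just as for $\av(231)$ in Figure~\ref{fig:231}.
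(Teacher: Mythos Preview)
Your overall plan---find a cover, read off a functional equation, solve---matches the paper exactly, but several of your concrete guesses are off and would not lead to the cover in Equation~\eqref{eq:123_132_213}.

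First, your block set is incomplete: by definition $\mcF=\av(123,132,213)$ is itself a block (take $B'=B$), and in fact it is the only nontrivial block that the actual cover uses. No $\mcD$ appears anywhere. The cover is simply
\[
\mcF \;=\; \onebox \;\sqcup\; \strule{\struleds}{2/2}[0/1/$\bullet$, 1/0/$\mcF$] \;\sqcup\; \strule{\struleds}{3/3}[0/1/$\bullet$, 1/2/$\bullet$, 2/0/$\mcF$],
\]
which gives $F(x)=1+xF(x)+x^{2}F(x)$ directly---that is the paper's entire proof.

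Second, your structural observation is backwards. Avoiding $123$ and $132$ alone does \emph{not} force $n$ into the last two positions (e.g.\ $3412$ lies in $\mcF$ with $n$ in position~$2$ of~$4$). What is true is that avoiding all three of $123,132,213$ forces $n$ into one of the \emph{first} two positions: if $n$ sits at position $j\geq 3$ then $\pi_1,\pi_2,n$ is either a $123$ or a $213$. This is precisely the case split the cover encodes: either $\pi_1=n$ (the $2\times 2$ rule) or $\pi_1=n-1$, $\pi_2=n$ (the $3\times 3$ rule), with the remaining suffix an arbitrary member of $\mcF$ on smaller values.

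So the route through $\mcD$ and expressions like $xD(x)$ or $x^{2}D(x)^{?}F(x)$ is a detour that will not close up to $\frac{1}{1-x-x^{2}}$; once you include $\mcF$ in the block set and use the first-two-positions observation, the cover and the Fibonacci recursion fall out in one line.
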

\begin{equation} \label{eq:123_132_213}
    \onebox \sqcup
    \strule{\struleds}{2/2}[0/1/$\bullet$, 1/0/$X$] \sqcup
    \strule{\struleds}{3/3}[0/1/$\bullet$, 1/2/$\bullet$, 2/0/$X$]
\end{equation}

\begin{proof}
    From the cover, we obtain $F(x) = 1 + x F(x) + x^2 F(x)$. Solving gives
    the claimed equation.
\end{proof}

\section{One pattern of length three and one of length four}
  \label{sec:one_pattern_of_length_three_and_one_of_length_four}

In this section, we consider bases consisting of one length three pattern and
one length four pattern. These permutation classes were enumerated by
\cite{generatingtrees} and \cite{restrictedatkinson}. We consider them here with
an emphasis on an automated and uniform approach, and because we need to refer
to some of them later.

There are eighteen non-symmetric cases, of which \struct{} can find a cover for
sixteen\footnote{The remaining two are treated in
Section~\ref{sec:further_improvements}}. The cover is easily verified, and a
functional equation can be written for the generating function. As the methods
are similar in all the proofs we only give the details in one case, for
$\av(132,4231)$ in Proposition~\ref{prop:132_4312+132_4231}.

\subsubsection*{Wilf-class $1$}
There is a single basis in this Wilf-class: $\{321,1234\}$. There are
finitely many avoiding permutations. \struct{} finds $21$ rules, the largest of
which is $6$-by-$6$.

\subsubsection*{Wilf-class $2$}
There is a single basis in this Wilf-class.
\begin{proposition}[\cite{restrictedatkinson}, Proposition 3.1]
    \label{prop:321_2134}
    The structure of the permutation class $\mcG_2 = \av(321, 2134)$ is given by the cover in
    Equation~\eqref{eq:321_2134}. The enumeration is given by \oeis{A116699} and
    the generating function is
    \begin{align*}
        G_2(x) = \frac{1 - 4x + 7x^2 - 5x^3 + 3x^4 - x^5}{1 - 5x + 10x^2 - 10x^3 + 5x^4 - x^5}
               =  1 &+ x + 2x^2 + 5x^3 + 13x^4 + 30x^5 + 61x^6\\
                    &+ 112x^7 + 190x^8 + 303x^9 + 460x^{10} + \dotsb.
    \end{align*}
\end{proposition}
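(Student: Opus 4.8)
The plan is to follow the same three-step recipe used above for the length-three bases: take the cover of $\mcG_2$ conjectured by \struct{} in Equation~\eqref{eq:321_2134}, verify by hand that it is a genuine disjoint cover of $\av(321,2134)$, and then read off a (linear) functional equation for $G_2(x)$ and solve it.

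First I would check the three properties that make the conjectured collection of rules a valid cover. (i) \emph{Soundness}: for each rule $\mcR$ one shows $\grid(\mcR)\subseteq\mcG_2$ by arguing that no $\mcR$-gridded permutation can contain $321$ or $2134$; this uses only the geometry of the rule — the relative positions of the distinguished point(s) and of the monotone blocks $\decr$, $\incr$ — together with the fact that each cell entry already avoids $321$ and $2134$. (ii) \emph{Unique griddings}: within each rule the distinguished point(s) are forced (for instance as the first left-to-right maximum, or as the cell containing the value $1$), which pins down the horizontal and vertical cut lines, so every permutation of $\grid(\mcR)$ has exactly one $\mcR$-gridding. (iii) \emph{Disjointness and completeness}: disjointness follows because the forced points of distinct rules are mutually incompatible, and completeness is an induction on length — a non-empty $\pi\in\mcG_2$ is classified by a canonical element, and $321$- together with $2134$-avoidance forces the remainder of $\pi$ to lie in the cells of exactly one rule. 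Since \struct{} has already verified the cover through length $\ell+4=8$ (here $\ell=4$) and the cover consists of finitely many rules of bounded size, the induction only needs this structural step and there are only finitely many cases to inspect.

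Having established the cover, I would translate it into an equation for $G_2(x)$ in the usual way: an empty cell contributes a factor $1$, a point contributes $x$, and a block contributes its own generating function, with the one subtlety that several monotone blocks sharing a row or a column must have their interleavings counted jointly — exactly the phenomenon that produced the factor $\tfrac{1}{1-2x}$ in the right-hand cover of $\mcE_{3,2}$ in Table~\ref{tab:B31_B33}. Iterating this mixing is what accounts for the denominator of $G_2(x)$, which is $(1-x)^5 = 1-5x+10x^2-10x^3+5x^4-x^5$. The resulting equation is linear in $G_2(x)$, of the form $G_2(x)=P(x)+R(x)\,G_2(x)$ with $P$ and $R$ rational with denominator a power of $1-x$; solving for $G_2(x)$ yields the stated closed form, and checking the initial terms of the expansion against $1+x+2x^2+5x^3+13x^4+\dotsb$ is routine. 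The main obstacle is step (iii): making the human-level verification airtight, i.e.\ proving that the canonical decomposition of an arbitrary $321$- and $2134$-avoider lands in precisely one of the conjectured rules and that every rule is reached. Once that is done, reading off $R(x)$ and $P(x)$ — with due care for the monotone-mixing factors — and solving is mechanical.
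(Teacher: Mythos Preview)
Your approach is the same as the paper's: verify the cover in Equation~\eqref{eq:321_2134}, translate it into a linear functional equation for $G_2(x)$, and solve. The paper's proof is the single line
\[
G_2(x) = 1 + xG_2(x) + x^2I(x) + 2x^3I(x)^3 + x^4I(x)^4,
\]
followed by ``solving gives the claimed equation''; your outline recovers exactly this (with $R(x)=x$ and $P(x)=1+x^2I(x)+2x^3I(x)^3+x^4I(x)^4$, whose denominator $(1-x)^4$ combines with $1/(1-x)$ to give the $(1-x)^5$ you identified).

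One small correction on the mixing remark: the shared-row/column interaction in the last three rules is between a single \emph{point} and an $\incr$ block, not between two monotone blocks, so the relevant joint contribution is $x/(1-x)^2 = xI(x)^2$ rather than $1/(1-2x)$. This is why a rule with three points and only two $\incr$ cells (rules four and five) contributes $x^3I(x)^3$, and the final rule, with two such point--block overlaps, contributes $x^4I(x)^4$. With that adjustment your reading of the cover matches the paper exactly.
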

\begin{equation} \label{eq:321_2134}
    \onebox \sqcup
    \strule{\struleds}{2/2}[0/0/$\bullet$, 1/1/$\mcG_2$] \sqcup
    \strule{\struleds}{3/3}[0/1/$\bullet$, 1/2/$\incr$,   2/0/$\bullet$] \sqcup
    \strule{\struleds}{5/4}[0/2/$\incr$,   1/3/$\bullet$, 2/0/$\bullet$, 3/1/$\incr$, 4/2/$\bullet$] \sqcup
    \strule{\struleds}{4/5}[0/2/$\bullet$, 1/3/$\incr$,   2/0/$\bullet$, 3/1/$\incr$, 3/4/$\bullet$] \sqcup
    \strule{\struleds}{5/5}[0/2/$\bullet$, 1/3/$\incr$,   2/0/$\bullet$, 3/1/$\incr$, 3/4/$\bullet$, 4/3/$\bullet$]
\end{equation}

\begin{proof}
    From the cover, we obtain $G_2(x) = 1 + xG_2(x) + x^2I(x) + 2x^3I(x)^3 + x^4I(x)^4$.
    Solving gives the claimed equation.
\end{proof}

\subsubsection*{Wilf-class $3$}
There is a single basis in this Wilf-class.
\begin{proposition}[\cite{restrictedatkinson}, Proposition 3.3]
    The structure of the permutation class $\mcG_3 = \av(132, 4321)$ is given by the cover in
    Equation~\eqref{eq:132_4321}. The enumeration is given by \oeis{A116701} and
    the generating function is
    \begin{align*}
        G_3(x) = \frac{1 - 4x + 7x^2 - 5x^3 + 3x^4}{1 - 5x + 10x^2 - 10x^3 + 5x^4 - x^5}
               =  1 &+ x + 2x^2 + 5x^3 + 13x^4 + 31x^5 + 66x^6\\
                    &+ 127x^7 + 225x^8 + 373x^9 + 586x^{10} + \dotsb.
    \end{align*}
\end{proposition}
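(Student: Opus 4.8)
The plan is to follow the template used elsewhere in this section: take the disjoint cover of $\mcG_3 = \av(132, 4321)$ displayed in Equation~\eqref{eq:132_4321} (conjectured by \struct{} and already machine-verified valid up to length $\ell+4 = 8$), confirm by hand that it is a genuine disjoint cover, and then transcribe it into a functional equation for $G_3(x)$ and solve. Since \struct{} only outputs a conjecture, the real content of the proof is the human verification, and I would organize it in three parts: each listed rule $\mcR$ is a legitimate \struct{} rule contained in $\mcG_3$; the grid classes of the rules are pairwise disjoint; and together they exhaust $\mcG_3$.

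For the first part I would check, for each rule, that no choice of contents for its cells can create a $132$ or a $4321$ spread across the rule, and that every gridded permutation has a unique gridding so that the cell contents are recoverable. This is a finite verification about how the blocks occurring in the cover — the point $\bullet$, monotone blocks, and $\mcG_3$ itself — may share a row, column or diagonal, which is precisely the pruning information \struct{} computes; the only extra observation needed is that an increasing (or decreasing) block cannot combine with the monotone runs forced by the rule's shape into a forbidden pattern of the relevant length. Disjointness of the grid classes I would get by comparing the "shapes'' of the rules (the relative positions of their outermost points) and checking that no permutation simultaneously fits two of them.

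The substantive step, and the one I expect to be the main obstacle, is exhaustiveness. Here I would use the classical decomposition of a $132$-avoider — the reverse of the structure in Figure~\ref{fig:231}: a non-empty $\pi \in \av(132)$ factors as $\pi = L\, n\, R$ with $n$ the maximum, every entry of $L$ lying above every entry of $R$, and $L, R \in \av(132)$. Imposing $4321$-avoidance forces $R$ to avoid $321$ (since $n$ sits above all of $R$), and it bounds the total length of a decreasing run passing through $L$ and then into $R$; recursing on $L$ while tracking how many length-$\ge 2$ decreasing blocks have already accumulated gives a bounded-memory case analysis. The point I would need to get exactly right is that these cases correspond one-to-one with the rules of the cover, including the $5\times 5$ rule and the intermediate ones, with no permutation double-counted or omitted — the \struct{} check up to length $8$ is a reassuring consistency test but not a substitute for this argument, which is the same kind of bookkeeping carried out in full for $\av(132,4231)$ in Proposition~\ref{prop:132_4312+132_4231}.

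Once the cover is established, reading off the enumeration is routine: the empty rule $\onebox$ contributes $1$, each $\bullet$ contributes a factor $x$, each increasing or decreasing block contributes $\tfrac{1}{1-x}$, and the recursive $\mcG_3$ cell contributes $G_3(x)$; because griddings are unique, each rule contributes the product of its cell generating functions, and the disjoint cover gives $G_3(x)$ as the sum of these. This produces a linear equation $G_3(x) = A(x) + B(x) G_3(x)$ with $A$ and $B$ rational in $x$, whose solution $G_3(x) = A(x)/(1 - B(x))$ simplifies to $\frac{1 - 4x + 7x^2 - 5x^3 + 3x^4}{1 - 5x + 10x^2 - 10x^3 + 5x^4 - x^5}$, with denominator $(1-x)^5$; expanding as a power series and matching the coefficients listed in the statement confirms the result.
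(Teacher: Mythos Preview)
Your overall plan is the paper's: take the cover in Equation~\eqref{eq:132_4321}, verify it by the same hand argument sketched in detail for Proposition~\ref{prop:132_4312+132_4231}, read off a linear functional equation for $G_3(x)$, and solve. The paper's entire written proof is the one line $G_3(x) = 1 + xG_3(x) + x^2 E_2(x) I(x) + x^3 I(x)^4$, followed by ``solving produces the claimed equation.''

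Where your proposal goes wrong is in the description of the cover itself. The blocks occurring in Equation~\eqref{eq:132_4321} are not only $\bullet$, monotone classes, and $\mcG_3$: the $4\times 4$ rule has a cell containing $\mcEr_2 = \av(132,321)$, the reverse of $\mcE_2$. There is also no $5\times 5$ rule; the four rules are $1\times 1$, $2\times 2$, $4\times 4$, and $7\times 7$. This matters for the computation: the $4\times 4$ rule contributes $x^2 E_2(x) I(x)$, not a pure power of $1/(1-x)$, and it is the factor $E_2(x) = (1 - 2x + 2x^2)/(1-x)^3$ that produces the stated numerator over $(1-x)^5$. Your exhaustiveness sketch (recurse on $L$ in $\pi = L\,n\,R$, track accumulated descents) is the right idea, but to make it a proof you must match the cases to \emph{these} four rules --- in particular, recognize that when $R$ is nonempty the left block is forced into $\av(132,321)$ (not all of $\mcG_3$), and that the $7\times 7$ rule, with three points and four increasing cells, captures the remaining case.
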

\begin{equation} \label{eq:132_4321}
    \onebox \sqcup
    \strule{\struleds}{2/2}[0/0/$\mcG_3$,  1/1/$\bullet$] \sqcup
    \strule{\struleds}{4/4}[0/2/$\mcEr_2$, 1/3/$\bullet$, 2/0/$\incr$, 3/1/$\bullet$] \sqcup
    \strule{\struleds}{7/7}[0/5/$\incr$,   1/6/$\bullet$, 2/2/$\incr$, 3/3/$\bullet$, 4/0/$\incr$, 5/1/$\bullet$, 6/4/$\incr$]
\end{equation}
Here $\mcEr_2 = \av(132, 321)$ is the reverse of $\mcE_2$, and so they have the
same enumeration.

\begin{proof}
    From the cover, we obtain $G_3(x) = 1 + xG_3(x) + x^2E_2(x)I(x) + x^3I(x)^4$.
    Solving produces the claimed equation.
\end{proof}

\subsubsection*{Wilf-class $4$}
There is a single basis in this Wilf-class.

\begin{proposition}[\cite{restrictedatkinson}, Proposition 3.2]
    The structure of the permutation class $\mcG_4 = \av(321, 1324)$ is given by the cover in
    Equation~\eqref{eq:321_1324}. The enumeration is given by \oeis{A179257} and
    the generating function is
    \begin{align*}
        G_4(x) = \frac{1 - 5x + 11x^2 - 12x^3 + 8x^4 - 2x^5}{1 - 6x + 15x^2 - 20x^3 + 15x^4 - 6x^5 + x^6}
               =  1 &+ x + 2x^2 + 5x^3 + 13x^4 + 32x^5 + 72x^6\\
                    &+ 148x^7 + 281x^8 + 499x^9 + 838x^{10} + \dotsb.
    \end{align*}
\end{proposition}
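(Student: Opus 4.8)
The plan is to proceed exactly as in the preceding Wilf-classes: take the cover conjectured by \struct{} in Equation~\eqref{eq:321_1324} as the object to be checked, verify it by hand, read off a functional equation, and solve it. By analogy with the covers of $\mcG_2$ and $\mcG_3$, I expect the non-point cells of the rules to be copies of the increasing class $\incr$, possibly together with the reverse class $\mcEr_2 = \av(132, 321)$ (which already appeared for $\mcG_3$ and is again a subclass of $\mcG_4$, since $\av(132) \subseteq \av(1324)$), and the recursion to reside in a single cell labelled $\mcG_4$.

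The verification breaks into three tasks. First, for each rule $\mcR$ in the cover one checks that $\grid(\mcR) \subseteq \av(321,1324)$; since all cell entries are points or one of a few rational, well-understood classes, an occurrence of $321$ or $1324$ in a gridded permutation can only be distributed over a bounded number of cells, so this is a finite check on the relative positions of the cells of $\mcR$. Second, one verifies that each rule has unique $\mcR$-griddings; this should follow from the point cells, whose entries are forced to be extreme within their band, pinning down the row and column boundaries. Third — and this is the real content — one must show that the rules are pairwise disjoint and that together they exhaust $\av(321,1324)$.

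For the third task I would invoke the structure of $321$-avoiders: such a permutation is a merge of two increasing subsequences, for instance its left-to-right maxima together with the remaining entries, which also increase. Imposing additionally the avoidance of $1324$ restricts how these two increasing runs may interleave near the top of the permutation to finitely many configurations, and each configuration should correspond to exactly one rule of the cover, the $\mcG_4$ cell (and the auxiliary $\mcEr_2$ cell, if present) absorbing the unbounded part. Disjointness follows because in each rule the positions of the point cells are determined by the permutation, so no permutation is gridded by two different rules. The automatic verification that the cover is valid up to length $\ell + 4 = 8$ is only evidence; this combinatorial argument showing that $1324$-avoidance forces the merge structure into exactly the listed rules is the step I expect to be the main obstacle.

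Finally, once the cover is established, the functional equation is read off in the usual way: each rule contributes the product of the generating functions of its cell entries — $x$ for each point, $I(x) = \frac{1}{1-x}$ for each $\incr$, $E_2(x)$ for $\mcEr_2$, and $G_4(x)$ for the recursive cell — accounting for any interactions allowed by the grid-class structure, exactly as in the derivations of $G_2(x)$ and $G_3(x)$. Summing over the rules gives an equation that, as in the previous cases, is linear in $G_4(x)$; solving this single equation produces the stated rational generating function, and checking the first several coefficients against the series displayed above and against \oeis{A179257} provides a final consistency check.
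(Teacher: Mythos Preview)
Your high-level plan---verify the cover, read off a functional equation, solve---is exactly what the paper does, and the paper's proof is essentially a one-liner. However, your expectations about the cover in Equation~\eqref{eq:321_1324} are wrong in a way that matters for the argument.

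The cover for $\mcG_4$ is \emph{not} recursive: unlike the covers for $\mcG_2$ and $\mcG_3$, no rule contains a cell labelled $\mcG_4$, and $\mcEr_2$ does not appear either. Every non-point cell is simply $\incr$. The six rules (the largest two of size $10\times 10$) describe finitely many ``staircase'' interleavings of increasing runs, and the resulting equation is explicit rather than linear in $G_4(x)$:
\[
G_4(x) = I(x) + x^2 I(x)^3 + x^3 I(x)^4 + x^4 I(x)^4 + x^5 I(x)^5 + x^4 I(x)^6,
\]
which one then puts over the common denominator $(1-x)^6$. Consequently, your verification sketch---where ``the $\mcG_4$ cell \dots\ absorbs the unbounded part''---does not apply here. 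The correct argument is that avoidance of both $321$ and $1324$ forces a $321$-avoider (a merge of two increasing sequences) into one of finitely many bounded interleaving patterns, each captured by one rule; there is no inductive tail to absorb. Your three verification tasks are the right checklist, but the case analysis for exhaustiveness and disjointness must be carried out over these six non-recursive rules rather than the recursive template you anticipated.
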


\begin{align}
\label{eq:321_1324}
    \strule{\struleds}{1/1}[0/0/$\incr$] &\sqcup
    \strule{\struleds}{3/4}[0/2/$\incr$, 1/0/$\incr$, 1/3/$\bullet$, 2/1/$\bullet$] \sqcup
    \strule{\struleds}{7/7}[0/2/$\incr$, 1/3/$\bullet$, 2/6/$\incr$, 3/0/$\incr$, 4/1/$\bullet$, 5/4/$\incr$, 6/5/$\bullet$] \sqcup
    \strule{\struleds}{8/8}[0/4/$\incr$, 1/0/$\incr$, 2/1/$\bullet$, 3/5/$\incr$, 4/6/$\bullet$, 5/7/$\bullet$, 6/2/$\incr$, 7/3/$\bullet$] \\ \notag
    &\sqcup \strule{\struleds}{10/10}[0/2/$\incr$, 1/3/$\bullet$, 2/0/$\incr$, 3/1/$\bullet$, 4/4/$\incr$, 5/5/$\bullet$, 6/8/$\incr$, 7/9/$\bullet$, 8/6/$\incr$, 9/7/$\bullet$] \sqcup
    \strule{\struleds}{10/10}[0/3/$\incr$, 1/4/$\bullet$, 2/7/$\incr$, 3/0/$\incr$, 4/1/$\bullet$, 5/8/$\incr$, 6/9/$\bullet$, 7/2/$\incr$, 8/5/$\incr$, 9/6/$\bullet$]
\end{align}

\begin{proof}
    From the cover, we obtain
    $G_4(x) =  I(x) + x^2 I(x)^3 + x^3 I(x)^4 + x^4 I(x)^4 + x^5 I(x)^5 + x^4 I(x)^6$.
    Solving produces the claimed equation.
\end{proof}

\subsubsection*{Wilf-class $5$}
There is a single basis in this Wilf-class.

\begin{proposition}[\cite{generatingtrees}]
    The structure of the permutation class $\av(321, 1342)$ is given by the cover in
    Equation~\eqref{eq:321_1342}. The enumeration is given by \oeis{A116702}
    and the generating function is
    \begin{align*}
        G_5(x) = \frac{1 - 4x + 6x^2 - 3x^3 + x^4}{1 - 5x + 9x^2 - 7x^3 + 2x^4}
               =  1 &+ x + 2x^2 + 5x^3 + 13x^4 + 32x^5 + 74x^6\\
                    &+ 163x^7 + 347x^8 + 722x^9 + 1480x^{10} + \dotsb.
    \end{align*}
\end{proposition}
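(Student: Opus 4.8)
The plan is to follow the same route as for the other Wilf-classes in this section. \struct{} has produced the conjectured cover recorded in Equation~\eqref{eq:321_1342}: the empty rule together with finitely many rules whose only non-trivial blocks are copies of $\mcI = \av(21)$ (and single points). I would promote this conjecture to a theorem and then read $G_5(x)$ off the cover.

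First I would check that each matrix in Equation~\eqref{eq:321_1342} is a genuine \struct{} rule for $\av(321,1342)$. For each one this is a finite verification: filling its cells with increasing sequences (and the marked cells with points) never creates a $321$ or a $1342$ --- which is controlled entirely by how the blocks are placed relative to one another --- and the positions of the points pin down all row and column boundaries, so every gridded permutation has a unique $\mcR$-gridding.

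The substantive step, and the one I expect to be the main obstacle, is showing that the union is a \emph{disjoint cover of all of} $\av(321,1342)$, not merely of its short members: \struct{} certifies the identity only up to length $\ell+4$. I would start from the standard structure of $321$-avoiders: such a $\pi$ splits canonically into its (increasing) sequence of left-to-right maxima and the sequence of remaining entries, which is forced to be increasing as well, with each non-maximum lying below the left-to-right maximum immediately to its left. I would then impose $1342$-avoidance on this two-layer picture and analyse where the two increasing subsequences are allowed to interact; the key point is that forbidding $1342$ leaves only finitely many ``shapes'' for this interaction, and each of them is exactly one of the rules of Equation~\eqref{eq:321_1342}. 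Verifying that this case analysis is exhaustive is the delicate part. Disjointness is comparatively easy: from the gridding of a permutation one reads off a distinguishing feature (such as the position of the maximum relative to the cell boundaries, or the coarse shape of the left-to-right maxima) that determines which rule produced it, so no permutation is counted twice.

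Finally I would translate the cover into a generating function by the usual dictionary: the empty rule contributes $1$, each point contributes a factor $x$, a cell occupied by a block contributes the generating function of that block, each rule contributes the product over its cells, and the cover contributes the sum over its rules. Recursive occurrences of $\av(321,1342)$ contribute $G_5(x)$; occurrences of $\mcI$ contribute $I(x) = \tfrac{1}{1-x}$; and --- exactly as for the right-hand cover of $\mcE_{3,2}$ in Proposition~\ref{prop:123_132_and_Wilf_eq} --- a row or column in which two monotone cells are interleaved contributes the mixing factor $\tfrac{1}{1-2x}$. This yields an equation linear in $G_5(x)$; I would expect something of the shape
\[
  G_5(x) = 1 + xG_5(x) + \frac{x^2}{(1-x)(1-2x)} + \frac{x^4}{(1-x)^2(1-2x)},
\]
and one can sanity-check that this has $\tfrac{1-4x+6x^2-3x^3+x^4}{1-5x+9x^2-7x^3+2x^4}$ as its solution. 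Solving the equation is then a routine rational-function manipulation; simplifying the answer (the denominator factors as $(1-x)^3(1-2x)$, a convenient consistency check) and comparing the first terms with \oeis{A116702} finishes the proof.
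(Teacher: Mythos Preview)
Your high-level plan---verify that each rule in Equation~\eqref{eq:321_1342} is a genuine \struct{} rule, argue that the rules partition $\av(321,1342)$, then read off a functional equation---is exactly the paper's template. But your description of the cover itself is wrong, and this propagates into your equation.

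Equation~\eqref{eq:321_1342} consists of three rules, not four, and the second rule is \emph{recursive}: it has $\mcG_5$ in its lower-left cell, a point at the top, and a single $\mcI$ block to the right. So your opening claim that the non-trivial blocks are ``only copies of $\mcI$ (and single points)'' is false. Moreover, neither rule contains two monotone blocks sharing a row or column, so there is no $\tfrac{1}{1-2x}$ mixing factor anywhere. The third rule does have two cells in the same column, but one of them is a point and the other is $\mcI$; this contributes $xI(x)^2$, not a mixing factor. Reading the actual cover gives the paper's functional equation
\[
  G_5(x) \;=\; 1 \;+\; xG_5(x)I(x) \;+\; x^3I(x)^4,
\]
that is, $G_5(x)=1+\dfrac{xG_5(x)}{1-x}+\dfrac{x^3}{(1-x)^4}$, which solves to the stated rational function.

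Your guessed equation happens to have the same solution (the denominator $(1-x)^3(1-2x)$ you cite is indeed the correct factorisation), but it does not arise from Equation~\eqref{eq:321_1342}; you have effectively reverse-engineered \emph{an} equation with the right answer rather than derived \emph{the} equation from the given cover. That is the gap: the ``translation'' step is not a routine reading of the rules you were asked to verify, and the structural analysis you sketch (four rules, interleaved monotone cells) is for a different decomposition than the one in the proposition.
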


\begin{equation} \label{eq:321_1342}
    \onebox \sqcup
    \strule{\struleds}{3/3}[0/0/$\mcG_5$,  1/2/$\bullet$, 2/1/$\incr$] \sqcup
    \strule{\struleds}{5/6}[0/2/$\bullet$, 1/3/$\incr$,   2/0/$\incr$, 2/5/$\bullet$, 3/1/$\bullet$, 4/4/$\incr$]
\end{equation}

\begin{proof}
    From the cover, we obtain
    $
        G_5(x) = 1 + xG_5(x)I(x) + x^3I(x)^4
    $,
    and solving produces the claimed equation.
\end{proof}

\subsubsection*{Wilf-class $6$} \label{sec:321_2143}

There is a single basis in this Wilf-class.

\begin{proposition}[\cite{generatingtrees}]
  \label{prop:321_2143}
  The structure of the permutation class $\mcG_6 = \av(321, 2143)$ is given by the cover in Equation~\eqref{eq:321_2143}.
  The enumeration is given by \oeis{A088921}
  and the generating function is
  \begin{align*}
      G_5(x) = \frac{1 - 4x + 5x^2 - x^3}{1 - 5x + 8x^2 - 4x^3}
             = 1 &+ x + 2x^2 + 5x^3 + 13x^4 + 33x^5 + 80x^6 \\
              &+ 185x^7 + 411x^8 + 885x^9 + 1862x^{10} + \dotsb.
  \end{align*}
\end{proposition}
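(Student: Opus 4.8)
The plan is to follow the uniform strategy of this section: take the disjoint cover conjectured by \struct{} and recorded in Equation~\eqref{eq:321_2143}, verify by hand that it genuinely is a disjoint cover of $\mcG_6 = \av(321, 2143)$, read off the resulting functional equation for $G_6(x)$, and solve it.

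The verification has the two usual parts. First, each rule $\mcR$ occurring in the cover must be a valid \struct{} rule contained in the class: one checks that the blocks appearing together in a row, column or diagonal cannot combine to form a $321$ or a $2143$, so that $\grid(\mcR) \subseteq \av(321, 2143)$, and that the positions of the point(s) in $\mcR$ force the $\mcR$-gridding to be unique. Second, the rules must be exhaustive and pairwise disjoint, i.e.\ every nonempty $\pi \in \mcG_6$ has an $\mcR$-gridding for exactly one $\mcR$ in the cover. As with $\av(231)$ in Figure~\ref{fig:231}, the natural way to see this is to fix a canonical decomposition of $\pi$ — for instance peeling off a distinguished extremal element (such as the position of a leftmost or bottommost point relative to the increasing runs that $321$-avoidance imposes) — and to check that the resulting skeleton matches precisely one rule, with the remaining cells landing in $\incr = \av(21)$, in a smaller class already enumerated above, or recursively in $\mcG_6$.

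Granting the cover, the rest is mechanical. A rule of shape $a \times b$ carrying $p$ points whose non-point cells are filled by classes with generating functions $C_1(x), \dots, C_q(x)$ contributes $x^{p} \prod_{i} C_i(x)$ to $G_6(x)$, and $G_6(x)$ is the sum of these contributions over the rules of the cover, with the empty rule $\onebox$ contributing $1$. Because the only infinite blocks that occur are $\incr$, with $I(x) = \tfrac{1}{1-x}$, together with previously enumerated classes and $\mcG_6$ itself, this functional equation — as in the other single-basis classes of this section — is linear in $G_6(x)$ with coefficients rational in $x$; solving it yields the rational generating function in the statement, and comparing the first few Taylor coefficients against \oeis{A088921} is a final check.

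The main obstacle is precisely the combinatorial verification of the cover — in particular arguing that it is a disjoint cover in \emph{every} length, not only up to length $\ell + 4 = 8$ as \struct{} tests. That is the one place a short genuine argument is required; once it is in hand, writing down and solving the functional equation is routine, exactly as in the preceding cases of this section.
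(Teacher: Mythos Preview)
Your overall plan---verify the cover of Equation~\eqref{eq:321_2143}, read off a functional equation, and solve---is exactly what the paper does. However, the formula you state for the contribution of a rule is wrong for this particular cover, and following it would give an incorrect generating function.

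You write that a rule with $p$ points and non-point cells carrying classes $C_1,\dots,C_q$ contributes $x^{p}\prod_i C_i(x)$. That product formula is only valid when every non-empty cell lies in its own row and its own column. In the third rule of Equation~\eqref{eq:321_2143} the two $\incr$ cells at positions $(0,0)$ and $(0,3)$ share column~$0$, and in the fourth rule the two $\incr$ cells at $(0,0)$ and $(5,0)$ share row~$0$. In each case the elements of the two cells interleave (in position, respectively in value), so one is counting shuffles of two increasing sequences; the correct contribution of such a pair is $\sum_{n\ge 0}2^n x^n=\frac{1}{1-2x}$, not $I(x)^2=\frac{1}{(1-x)^2}$. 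This is precisely the ``mixing'' phenomenon already used in the proof of Proposition~\ref{prop:123_132_and_Wilf_eq} for $\mcE_{3,2}$ and again in Proposition~\ref{prop:132_4312+132_4231} for $\mcG_{7,1}$.

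With the correct reading, the paper obtains
\[
G_6(x)=1+xG_6(x)+\frac{x^2 I(x)}{1-2x}+\frac{x^4 I(x)^3}{1-2x},
\]
which solves to the stated rational function. Your version, $G_6(x)=1+xG_6(x)+x^2 I(x)^3+x^4 I(x)^5$, gives $G_6(x)$ as a finite sum of powers of $\frac{1}{1-x}$ and hence a polynomially growing sequence, contradicting the exponential growth visible in \oeis{A088921}. So the gap is not in the verification of the cover but in the ``routine'' step of translating it into a functional equation: you need to account for cells that share a row or column.
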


\begin{equation} \label{eq:321_2143}
    \onebox \sqcup
    \strule{\struleds}{2/2}[0/0/$\mcG_6$,  1/1/$\bullet$] \sqcup
    \strule{\struleds}{4/5}[0/0/$\incr$, 0/3/$\incr$, 1/4/$\bullet$, 2/1/$\bullet$, 3/2/$\incr$]\sqcup
    \strule{\struleds}{9/8}[0/0/$\incr$, 1/2/$\incr$, 2/3/$\bullet$, 3/6/$\incr$, 4/7/$\bullet$, 5/0/$\incr$, 6/1/$\bullet$, 7/4/$\bullet$, 8/5/$\incr$]
\end{equation}

\begin{proof}
  From the cover, we obtain
  $
    G_6(x) = 1 + xG_6(x) + \frac{x^2 I(x)}{1-2x} + \frac{x^4I(x)^3}{1-2x}
  $,
  and solving gives the claimed equation.
\end{proof}

\subsubsection*{Wilf-class $7$}

There are two bases in this Wilf-class enumerated by \oeis{A005183} (shifted).
\begin{proposition} [\cite{restrictedatkinson}, Propositions 3.7 and 3.8 of supplement]
    \label{prop:132_4312+132_4231}
    The structures of the permutation classes $\mcG_{7,1} = \av(132, 4312)$ and $\mcG_{7,2} =
    \av(132, 4231)$ are given by the covers in
    Table~\ref{tab:132_4312+132_4231}. Their generating function is
    \begin{align*}
        G_{7,1}(x) = \frac{1 - 4x + 5x^2 - x^3}{1 - 5x + 8x^2 - 4x^3}
                   =  1 &+ x + 2x^2 + 5x^3 + 13x^4 + 33x^5 + 81x^6\\
                        &+ 193x^7 + 449x^8 + 1025x^9 + 2305x^{10} + \dotsb.
    \end{align*}
\end{proposition}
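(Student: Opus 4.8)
The plan is to carry out, in this one case that the section promised to do in full, the verification that \struct{} otherwise leaves to the reader: confirm that the cover of $\mcG_{7,2}=\av(132,4231)$ displayed in Table~\ref{tab:132_4312+132_4231} is a genuine disjoint cover of the class, then read off and solve the resulting functional equation. The companion class $\mcG_{7,1}=\av(132,4312)$ is handled by an analogous analysis, and the Wilf-equivalence recorded by \oeis{A005183} then follows because the two functional equations have the same solution.

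I would build everything on the classical structure of $132$-avoiders (the reverse of the decomposition of $\av(231)$ recalled in the introduction): a nonempty $\pi\in\av(132)$ factors uniquely as $\pi=\alpha\, n\,\beta$ with $n$ its maximum, $\alpha,\beta\in\av(132)$, and every entry of $\alpha$ exceeding every entry of $\beta$. The one new ingredient is a lemma locating the copies of $4231$ relative to this factorization. Because $4231$ is itself $132$-avoiding and its last three entries form a $231$, every occurrence of $4231$ is a ``dominating'' entry followed by an occurrence of $231$ strictly below it; a short case check on where the four entries can lie among $\alpha$, $n$, $\beta$ then shows that $\pi\in\av(132,4231)$ if and only if $\alpha\in\av(132,4231)$, $\beta\in\av(132,231)$, and moreover $\alpha\in\av(312)$ whenever $\beta\neq\emptyset$ — the last clause because a copy of $312$ in $\alpha$, together with the pivot $n$ and any entry of $\beta$, builds a $4231$. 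Since $\av(132,312)\subseteq\av(4231)$ automatically, this says $\pi$ is empty, or $\pi=\alpha\,n$ with $\alpha\in\mcG_{7,2}$, or $\pi=\alpha\,n\,\beta$ with $\alpha\in\av(132,312)=\mcE_{3,2}$ and $\beta$ a nonempty element of $\av(132,231)$; expanding $\mcE_{3,2}$ and $\av(132,231)$ by the point/$\incr$/$\decr$ structures recorded earlier in the paper matches the rules in the table. Both directions are then routine: a gridded permutation of any rule is a staircase of the right shape, hence avoids $132$, and satisfies the criterion just derived, hence avoids $4231$; conversely each $\pi\in\av(132,4231)$ lands in exactly one rule, and the uniqueness of every $\mcR$-gridding reduces to that of the $\alpha\,n\,\beta$ factorization together with the uniqueness of the auxiliary structures.

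With the cover established, the enumeration is mechanical: each rule contributes $x$ to the power of its number of point cells, times the product of the generating functions of its remaining cells, where an $\incr$ or $\decr$ cell contributes $\tfrac{1}{1-x}$ and the recursive cell contributes $G_{7,2}(x)$. The three groups of rules contribute $1$, $xG_{7,2}(x)$, and $x\cdot E_{3,2}(x)\cdot\tfrac{x}{1-2x}=\tfrac{x^{2}(1-x)}{(1-2x)^{2}}$ — using that $\av(132,231)$ is Wilf-equivalent to $\mcE_{3,1}$, so its nonempty members contribute $\tfrac{x}{1-2x}$ — whence $G_{7,2}(x)=\tfrac{1}{1-x}+\tfrac{x^{2}}{(1-2x)^{2}}$, which simplifies to the stated rational function; the parallel computation for $\mcG_{7,1}$, with $312$ in place of $231$, gives the same generating function.

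The main obstacle is the lemma in the middle paragraph: the bookkeeping that determines exactly how the four entries of a candidate $4231$ (resp.\ $4312$) may be split among $\alpha$, the pivot $n$, and $\beta$, and the check that the finite cover \struct{} produces captures precisely the resulting list of allowed shapes — neither too large (completeness) nor overlapping (disjointness). The experimental verification \struct{} carries out up to length $\ell+4$ makes this very plausible, but a clean human argument still takes care, in particular to see that the two side conditions ``$\beta\in\av(132,231)$'' and ``$\alpha\in\av(312)$ when $\beta\neq\emptyset$'' do not interact so as to force additional rules.
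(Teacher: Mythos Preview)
Your approach is essentially the paper's: decompose a nonempty $\pi\in\mcG_{7,2}$ around its maximum as $\alpha\,n\,\beta$, deduce that when $\beta\neq\emptyset$ one must have $\alpha\in\mcE_{3,2}=\av(132,312)$ and $\beta\in\av(132,231)$, and read off the functional equation (the paper goes one step further, pulling out the leftmost point $k$ of $\beta$ to match the $5\times5$ rule in the table exactly, which gives the equivalent form $G_{7,2}(x)=1+xG_{7,2}(x)+x^{2}I(x)E_{3,2}(x)^{2}$). One small slip to fix: in your justification that $\alpha$ must avoid $312$, the pivot $n$ plays no role---a $312$ in $\alpha$ together with \emph{any single} entry of $\beta$ (four entries in all, not five) already forms a $4231$, since every entry of $\alpha$ exceeds every entry of $\beta$.
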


\begin{table}[ht]
  \centering
  \begin{tabular}{|l|c|}
      \hline
       $\mcG_{7,1} = \av(132, 4312)$ &
      $
      \strule{\struleds}{1/1}[0/0/$\incr$] \sqcup
      \strule{\struleds}{5/6}[0/3/$\bullet$, 1/4/$\incr$, 2/1/$\mcE_{3,2}$, 3/2/$\bullet$, 4/0/$\decr$, 4/5/$\incr$]
      $\\
      \hline
      $\mcG_{7,2} = \av(132, 4231)$ &
      $
      \onebox \sqcup
      \strule{\struleds}{2/2}[0/0/$G_{7,2}$, 1/1/$\bullet$] \sqcup
      \strule{\struleds}{5/5}[0/3/$\mcE_{3,2}$, 1/4/$\bullet$, 2/1/$\bullet$, 3/0/$\mcEi_{3,2}$, 4/2/$\incr$]
      $\\
      \hline
  \end{tabular}
  \caption{The structure of $\mcG_{7,1}$ and $\mcG_{7,2}$. Here $\mcEi_{3,2} = \av(132, 231)$ is the inverse symmetry of $\mcE_{3,2}$.}
  \label{tab:132_4312+132_4231}
\end{table}

We will provide a proof for the cover obtained for $\mcG_{7,2}$. It illustrates
the method needed for the other covers found in this section.

\begin{proof}
    From the cover for $\mcG_{7,1}$ in Table~\ref{tab:132_4312+132_4231} we obtain
    \[
        G_{7,1}(x) = I(x) + \frac{x^2 I(x) E_{3,2}(x)}{1-2x}.
    \]

    Solving produces the claimed equation. We will now provide proof for the
    cover of $\mcG_{7,2}$. A permutation of length $n \geq 1$ in $\mcG_{7,2}$ is
    of the form $\alpha n \beta$. If $\beta$ is the empty permutation then
    $\alpha$ can be any permutation in $\mcG_{7,2}$. Otherwise, the permutation
    is of the form $\alpha n k \beta \gamma$ where all the letters in $\alpha$
    are greater than those in $\beta$ and $\gamma$, else you would have an
    occurrence of $132$ using $n$, and all the elements in $\beta$ are less than
    those in $\gamma$, else you would have an occurrence of $4231$ using $n$ and
    $k$. In order to avoid $132$ and $4231$, we have $\alpha \in \mcE_{3,2}$,
    $\beta \in \av(132, 231)$, and $\gamma \in \av(21)$.  This is the cover for
    $\mcG_{7,2}$ in Table~\ref{tab:132_4312+132_4231}. From this cover we obtain
    \[
        G_{7,2}(x) = 1 + x G_{7,2}(x) + x^2 I(x) E_{3,2}(x)^2,
    \]
    and solving gives the claimed equation.
\end{proof}

\subsubsection*{Wilf-class $8$}
There is a single basis in this Wilf-class.

\begin{proposition}[\cite{restrictedatkinson}, Proposition 3.9 of supplement]
    The structure of the permutation class $\av(132, 3214)$ is given by the cover in
    Equation~\eqref{eq:132_3214}. The enumeration is given by \oeis{A116703} and
    the generating function is
    \begin{align*}
        G_8(x) = \frac{1 - 3x + 3x^2 - x^3}{1 - 4x + 5x^2 - 3x^3}
               =  1 &+ x + 2x^2 + 5x^3 + 13x^4 + 33x^5 + 82x^6\\
                    &+ 202x^7 + 497x^8 + 1224x^9 + 3017x^{10} + \dotsb.
    \end{align*}
\end{proposition}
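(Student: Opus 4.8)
The plan is to mimic the largest-element decomposition used for $\mcG_{7,2}$ in the proof of Proposition~\ref{prop:132_4312+132_4231}. Write $\mcG_8 = \av(132,3214)$, and express a non-empty $\pi \in \mcG_8$ as $\pi = \alpha\,n\,\beta$, where $n$ is the largest entry. Avoidance of $132$ forces every entry of $\alpha$ to be larger than every entry of $\beta$ (an $\alpha$-entry below a $\beta$-entry would form a $132$ together with $n$), so $\alpha$ consists of the top $|\alpha|$ values below $n$, $\beta$ of the bottom $|\beta|$ values, and each of $\alpha,\beta$ avoids $132$. Next, since $n$ is the global maximum it can only play the role of the ``$4$'' in an occurrence of $3214$, and such an occurrence exists precisely when $\alpha$ contains a $321$ (the ``$321$'' must lie to the left of $n$, hence inside $\alpha$). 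Therefore $\alpha \in \av(132,321) = \mcEr_2$ while $\beta \in \mcG_8$. Reading this off as a grid places $\alpha$, a single point, and $\beta$ into three disjoint rows and columns, which is exactly the $3\times 3$ rule in Equation~\eqref{eq:132_3214}; together with the empty permutation $\onebox$ this is the claimed cover.

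The second step is to confirm that these cell conditions are \emph{equivalent} to avoiding $\{132,3214\}$, so that the two pieces of the cover are disjoint and exhaustive and each non-empty $\pi$ has a unique gridding (determined by the position of $n$ and the value-split between $\alpha$ and $\beta$). The forward direction is the paragraph above. For the converse, observe that in any pattern occurrence using entries from both $\alpha$ and $\beta$ the leftmost entry lies in $\alpha$ and the rightmost in $\beta$, so the leftmost value exceeds the rightmost; this rules out a $132$ split across the blocks (its leftmost value is the smallest) and a $3214$ split across the blocks (its rightmost value is the largest). Hence every $132$ of $\pi$ lies inside $\alpha$, inside $\beta$, or uses $n$ — and a $132$ through $n$ would again require an $\alpha$-entry below a $\beta$-entry — so it is excluded by the cell conditions; similarly every $3214$ of $\pi$ lies inside $\alpha$ (impossible, since $\alpha$ avoids $321 \preceq 3214$), inside $\beta$ (impossible, since $\beta \in \mcG_8$), or uses $n$ (impossible, since $\alpha$ avoids $321$). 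This case check is the only real content of the argument; everything else is bookkeeping.

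Finally, the generating function follows mechanically: the $3\times 3$ rule contributes a factor $x$ for the point and, since the reverse class $\mcEr_2$ is enumerated by $E_2(x)$, factors $E_2(x)$ and $G_8(x)$ for $\alpha$ and $\beta$, giving
\[
  G_8(x) = 1 + x\,E_2(x)\,G_8(x), \qquad\text{hence}\qquad G_8(x) = \frac{1}{1 - x\,E_2(x)}.
\]
Substituting $E_2(x) = (1-2x+2x^2)/(1-x)^3$ and using $(1-x)^3 - x(1-2x+2x^2) = 1 - 4x + 5x^2 - 3x^3$ yields $G_8(x) = (1-x)^3/(1-4x+5x^2-3x^3) = (1-3x+3x^2-x^3)/(1-4x+5x^2-3x^3)$, matching the claim and \oeis{A116703}. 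The main obstacle, as throughout this section, is the equivalence between the cell conditions and pattern avoidance established in the second paragraph; once that is settled, uniqueness of the gridding and the algebra are routine.
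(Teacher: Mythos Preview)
Your proof is correct and takes essentially the same approach as the paper: the paper derives $G_8(x) = 1 + x\,E_2(x)\,G_8(x)$ from the cover and solves, referring back to the detailed verification given once for $\mcG_{7,2}$; you have simply written out that verification explicitly for this case and carried through the algebra.
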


\begin{equation} \label{eq:132_3214}
    \onebox \sqcup
    \strule{\struleds}{3/3}[0/1/$\mcEr_2$, 1/2/$\bullet$, 2/0/$\mcG_8$]
\end{equation}

\begin{proof}
    From the cover, we obtain
    $
        G_8(x) = 1 + x E_2(x) G_8(x)
    $,
    and solving gives the claimed equation.
\end{proof}

\subsubsection*{Wilf-class $9$}
There are 9 bases in this Wilf-class, enumerated by a bisection of the Fibonacci
numbers \oeis{A001519}.

\begin{proposition} [\cite{restrictedatkinson}, Propositions 16 and 18 of supplement]
   \label{321_3142_and_132_3412}
    The structure of the permutation classes $\mcG_{9,1} = \av(321, 3142)$, $\mcG_{9,2} = \av(132, 3412)$ are given by
    the covers in Table~\ref{tab:91_and_92}. The generating function is
    \begin{align*}
        G_{9,1}(x) = \frac{1 - 2x}{1 - 3x + x^2}
                   =  1 &+ x + 2x^2 + 5x^3 + 13x^4 + 34x^5 + 89x^6\\
                        &+ 233x^7 + 610x^8 + 1597x^9 + 4181x^{10} + \dotsb.
    \end{align*}
\end{proposition}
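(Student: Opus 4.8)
The plan is to follow the template already used for $\mcG_{7,2}$ in the proof of Proposition~\ref{prop:132_4312+132_4231}: treat the \struct{}-conjectured covers recorded in Table~\ref{tab:91_and_92} as candidates, verify each by a short structural decomposition, read off a linear functional equation, and solve it.

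For $\mcG_{9,1}=\av(321,3142)$ I would factor a non-empty permutation $\pi$ of length $n$ through its largest entry, $\pi=\alpha\,n\,\beta$. Avoidance of $321$ immediately forces $\beta\in\av(21)$, since $n$ together with any inversion of $\beta$ is a $321$; it also constrains how inversions of $\alpha$ may interact with $\beta$, and jointly with avoidance of $3142$ (which forbids $n$ from serving as the top element of such a pattern) it pins down the relative positions and the admissible sub-classes of the remaining blocks. The cases divide into the base rule(s) of Table~\ref{tab:91_and_92} and a recursive part, the latter organised according to how the entries of $\beta$ sit relative to $\alpha$ and $n$. Each case should reproduce exactly one of the listed rules; because the decomposition of a given $\pi$ is forced, the associated grid classes are pairwise disjoint and the rules are genuine \struct{} rules.

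Granting the cover, the enumeration is routine. Writing $I(x)=\tfrac{1}{1-x}$ for the generating function of $\av(21)$, as in Section~\ref{sec:patterns_of_length_three}, each rule contributes a factor $x$ for every point $\bullet$ it contains, times the product of the generating functions of its non-point cells; summing over the rules in the cover of $\mcG_{9,1}$ gives a linear equation for $G_{9,1}(x)$ whose solution is $\tfrac{1-2x}{1-3x+x^2}$, matching the bisection of the Fibonacci numbers \oeis{A001519}. For $\mcG_{9,2}=\av(132,3412)$ I would run the mirror argument, again factoring as $\pi=\alpha\,n\,\beta$: now $132$-avoidance forces every entry of $\alpha$ to exceed every entry of $\beta$, and $3412$-avoidance (with $n$ again the top element of the forbidden pattern) splits $\beta$ further, recovering the second cover of Table~\ref{tab:91_and_92}; its functional equation is likewise linear and again evaluates to $\tfrac{1-2x}{1-3x+x^2}$, re-establishing the Wilf-equivalence.

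The main obstacle I anticipate is the case analysis for the largest rule in each cover: one must verify both that no assignment of permutations to its cells creates a $321$ or $3142$ (respectively a $132$ or $3412$), and conversely that every permutation of the class falls into exactly one case of the decomposition, so that the listed rules cover $\av(B)$ disjointly. Once that is in hand, extracting and solving the functional equations is mechanical, just as in the earlier Wilf-classes of this section.
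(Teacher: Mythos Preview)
Your high-level plan matches the paper's template, and in fact the paper's own proof here is a single line: it records
\[
G_{9,1}(x)=1+xG_{9,1}(x)+\frac{x^{2}\,G_{9,1}(x)}{1-2x}
\]
and solves. The gap in your proposal is in how you obtain that equation.

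You say each rule contributes ``a factor $x$ for every point $\bullet$ it contains, times the product of the generating functions of its non-point cells''. That recipe is valid only when the non-point cells lie in pairwise distinct rows and columns. In the third rule of the cover for $\mcG_{9,1}$ in Table~\ref{tab:91_and_92} the two $\incr$ cells occupy the \emph{same} row, and in the third rule for $\mcG_{9,2}$ the cells $\incr$ and $\decr$ occupy the same column. Two monotone cells sharing a line interleave by value and contribute $\tfrac{1}{1-2x}$, not $I(x)^{2}=\tfrac{1}{(1-x)^{2}}$; this is exactly the ``mixing'' already noted for the second cover of $\mcE_{3,2}$ in Proposition~\ref{prop:123_132_and_Wilf_eq}. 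With your product recipe the equation becomes
\[
G=1+xG+\frac{x^{2}G}{(1-x)^{2}},
\]
whose solution begins $1+x+2x^{2}+5x^{3}+12x^{4}+\dotsb$, already wrong at $x^{4}$.

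A secondary issue: the decomposition $\pi=\alpha\,n\,\beta$ you sketch does not actually ``recover'' the covers of Table~\ref{tab:91_and_92}. In the third rule for $\mcG_{9,2}$ the maximum is not pinned to a single cell (it sits inside the $\incr$ block whenever that block is non-empty), so that cover is organised around a different extreme point. Your $\mcG_{9,2}$ argument does produce a legitimate \emph{alternative} cover---$\alpha\neq\varepsilon$ together with $132$- and $3412$-avoidance forces $\beta$ to be decreasing, giving $G=1+xG+x(G-1)/(1-x)$, which also solves to $\tfrac{1-2x}{1-3x+x^{2}}$---but then you are proving a different structural statement than the one asserted. For $\mcG_{9,1}$ no such shortcut is available: the values of $\alpha$ and $\beta$ genuinely interleave, which is precisely what the shared row in the table's rule encodes and what forces the $\tfrac{1}{1-2x}$ factor.
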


\begin{proof}
    From the covers, we obtain
    \[
        G_{9,1}(x) = 1 + x G_{9,1}(x) + \frac{x^2 G_{9,1}}{1 - 2x}.
    \]
    Solving gives the claimed equation.
\end{proof}

\begin{table}[ht]
  \centering
  \begin{tabular}{|l|c|}
      \hline
      $\av(321, 3142)$ &
      $
      \onebox \sqcup
      \strule{\struleds}{2/2}[0/0/$\mcG_{9,1}$, 1/1/$\bullet$] \sqcup
      \strule{\struleds}{5/4}[0/0/$\mcG_{9,1}$, 1/2/$\incr$, 2/3/$\bullet$, 3/1/$\bullet$, 4/2/$\incr$]
      $\\
      \hline
      $\av(132, 3412)$ &
      $
      \onebox \sqcup
      \strule{\struleds}{2/2}[0/1/$\bullet$, 1/0/$\mcG_{9,2}$] \sqcup
      \strule{\struleds}{4/5}[0/2/$\bullet$, 1/1/$\mcG_{9,2}$, 2/3/$\bullet$, 3/4/$\incr$, 3/0/$\decr$]
      $\\
      \hline
  \end{tabular}
  \caption{The structure of $\mcG_{9,1}$ and $\mcG_{9,2}$}
  \label{tab:91_and_92}
\end{table}

\begin{proposition}[\cite{restrictedatkinson}, Propostions 11, 12, 13, 14 and 17 of supplement]
    The structures of the permutation classes $\mcG_{9,3} =  \av(132, 1234)$, $\mcG_{9,4} =
    \av(132, 4213)$, $\mcG_{9,5} = \av(132, 4123)$, $\mcG_{9,6} = \av(132,
    3124)$ and $\mcG_{9,7} = \av(132, 2134)$ are given by the covers in Table~\ref{tab:93_97}.
    The generating function is equal to $G_{9,1}(x)$.
\end{proposition}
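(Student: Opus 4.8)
The plan is to verify the five covers in Table~\ref{tab:93_97} one at a time, reusing the decomposition technique from the proof of Proposition~\ref{prop:132_4312+132_4231}, and then to extract and solve the corresponding functional equations. Since each of the five classes avoids $132$, a non-empty permutation $\pi$ in such a class factors uniquely as $\pi = \alpha\, n\, \beta$, where $n$ is the largest letter, every letter of $\alpha$ exceeds every letter of $\beta$, and $\alpha$ and $\beta$ again avoid $132$. This factorization accounts for the empty rule and the first non-trivial rule in each row of the table, and, exactly as in the $\mcG_{7,2}$ argument, it already makes the griddings unique: the position of $n$ fixes the vertical cuts, and the value of $n$ (together with the value of the first letter after $n$, when a finer decomposition is used) fixes the horizontal cuts.

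Next, for each of the five extra length-four patterns I would determine the restriction it imposes in the factorization $\alpha\, n\, \beta$. Because $\alpha$ lies entirely above $\beta$ and entirely to its left, an occurrence of a length-four pattern cannot use letters of both $\alpha$ and $\beta$ unless it also uses $n$, and then $n$ must play the role of a largest element of the pattern. Hence each forbidden pattern either forces a sub-pattern on $\alpha$ alone together with $n$ --- for instance $1234$ forces $\alpha \in \av(123,132) = \mcE_{3,1}$ --- or forces a condition jointly on $\alpha$ and $\beta$ that is resolved by an additional vertical or horizontal subdivision into blocks drawn from $\av(21)$, $\mcE_{3,1}$, $\mcE_{3,2}$ and the class itself, i.e.\ classes already enumerated in this section and the previous one. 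Checking that these constraints match the cells of Table~\ref{tab:93_97} both verifies the covers and, since the cuts are forced by $\pi$, confirms disjointness of the unions.

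Finally, I would read off the functional equation from each verified cover by the routine substitution of known generating functions ($I(x) = \tfrac{1}{1-x}$, $E_{3,1}(x) = \tfrac{1-x}{1-2x}$, $E_{3,2}(x) = E_{3,1}(x)$, and so on) for every block other than the class itself. For example, the non-trivial rule of the $\mcG_{9,3}$ cover should place $\mcE_{3,1}$, a new maximum, and $\mcG_{9,3}$ on a $3 \times 3$ staircase, giving
\[ G_{9,3}(x) = 1 + x\, E_{3,1}(x)\, G_{9,3}(x), \]
which solves to $\tfrac{1-2x}{1-3x+x^2} = G_{9,1}(x)$. Each of the remaining four equations is a comparably small rational equation in the unknown generating function, and solving all five yields $G_{9,1}(x)$, proving the asserted Wilf-equivalence.

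The step I expect to be the main obstacle is identifying the correct block decomposition in the cases where the length-four pattern constrains $\beta$, or the $\alpha$--$\beta$ interaction, rather than $\alpha$: there a single $3 \times 3$ rule is insufficient, and one must introduce the finer cuts recorded in Table~\ref{tab:93_97}, verify that the resulting blocks are precisely the claimed previously enumerated classes, and recheck that the subdivision preserves uniqueness of griddings. Once the decompositions are pinned down, the passage to functional equations and the algebra are entirely mechanical.
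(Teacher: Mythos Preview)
Your approach is correct and is in fact considerably more thorough than the paper's own proof, which simply reads off the single functional equation
\(G_{9,3}(x) = 1 + x\,E_{3,1}(x)\,G_{9,3}(x)\)
from the cover and solves it, leaving the verification of the covers (and the other four cases) as routine.

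One correction: the obstacle you anticipate does not arise. Every cover in Table~\ref{tab:93_97} consists of the empty rule together with a single $3\times 3$ rule; there are no finer cuts. In each case the decomposition $\pi=\alpha\,n\,\beta$ with $\alpha$ above $\beta$ forces the extra length-four constraint onto exactly one of $\alpha$ or $\beta$, never both, and reduces it to a length-three avoidance there. Concretely: for $1234$, $3124$, $2134$ the letter $n$ can only play the $4$ (rightmost maximum), so the residual $123$, $312$, $213$ lies in $\alpha$, giving $\alpha\in\mcE_{3,1},\mcE_{3,2},\mcE_{3,3}^{\mathrm r}$ respectively while $\beta$ stays in the class itself; for $4213$ and $4123$ the $4$ is leftmost, so the residual $213$ or $123$ lies in $\beta$, giving $\beta\in\mcE_{3,3}^{\mathrm r},\mcE_{3,1}$ while $\alpha$ stays in the class itself. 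No occurrence of any of these patterns can straddle $\alpha$ and $\beta$, since that would require an $\alpha$-letter to be smaller than a $\beta$-letter. Thus all five functional equations have the identical shape $G(x)=1+x\,E_{3,1}(x)\,G(x)$ (using $E_{3,1}=E_{3,2}=E_{3,3}$), and each solves to $G_{9,1}(x)$ exactly as you computed for $\mcG_{9,3}$.
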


\begin{proof}
    From the covers, we obtain
    $
        G_{9,3}(x) = 1 + x E_{3,1}(x) G_{9,3}(x)
    $,
    and solving gives the claimed equation.
\end{proof}

\begin{table}[ht]
  \centering
  \begin{tabular}{|l|c|}
      \hline
      $\av(132, 1234)$ &
      $
      \onebox \sqcup
      \strule{\struleds}{3/3}[0/1/$\mcE_{3,1}$, 1/2/$\bullet$, 2/0/$\mcG_{9,3}$]
      $\\
      \hline
      $\av(132, 4213)$ &
      $
      \onebox \sqcup
      \strule{\struleds}{3/3}[0/1/$\mcG_{9,4}$, 1/2/$\bullet$, 2/0/$\mcEr_{3,3}$]
      $\\
      \hline
      $\av(132, 4123)$ &
      $
      \onebox \sqcup
      \strule{\struleds}{3/3}[0/1/$\mcG_{9,5}$, 1/2/$\bullet$, 2/0/$\mcE_{3,1}$]
      $\\
      \hline
      $\av(132, 3124)$ &
      $
      \onebox \sqcup
      \strule{\struleds}{3/3}[0/1/$\mcE_{3,2}$, 1/2/$\bullet$, 2/0/$\mcG_{9,6}$]
      $\\
      \hline
      $\av(132, 2134)$ &
      $
      \onebox \sqcup
      \strule{\struleds}{3/3}[0/1/$\mcEr_{3,3}$, 1/2/$\bullet$, 2/0/$\mcG_{9,7}$]
      $\\
      \hline
  \end{tabular}
  \caption{The structure of $\mcG_{9,3}$, $\mcG_{9,4}$, $\mcG_{9,5}$,
           $\mcG_{9,6}$ and $\mcG_{9,7}$}
  \label{tab:93_97}
\end{table}

There are two more bases in this Wilf-class, $\mcG_{9,8} = \av(321, 2341)$ and
$\mcG_{9,9} = \av(321, 3412)$. We have not been able to find a cover for these
permutation classes. The reason for this appears linked to the reason there is
no cover for $\av(123)$. We will revisit these permutation classes in
Section~\ref{sec:further_improvements}, in particular,
Proposition~\ref{prop:321_2341+321_3412}.

\section{Comparison with existing approaches}
  \label{sec:polynomial_bases}

\subsection{Polynomial permutation classes}

A permutation class $\mcC$ is said to be \emph{polynomial} if the number of
length $n$ permutations, $|\mcC_n|$, is given by a polynomial for all
sufficiently large $n$. One of the first general results on permutation classes
by \cite{fibonaccidichotomy} states that if the number of length $n$
permutations in a permutation class is less than the $n$th Fibonacci number then
the permutation class is polynomial. This is known as the Fibonacci dichotomy
and alternative proofs were given by \cite{fibonaccistructure} and
\cite{polynomialconditions}. From the results of \cite{polynomialenumeration}, we
get the following theorem.

\begin{theorem}\label{thm:pegimpliesstruct}
  All polynomial permutation classes have a cover.
\end{theorem}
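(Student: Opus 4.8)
The plan is to reduce the claim to the structural result of \cite{polynomialenumeration}, which represents a polynomial permutation class by a finite set of peg permutations, and then to exhibit an explicit cover built from that data. So the first thing I would do is recall precisely what a peg permutation is and what it means for a finite set of peg permutations to \emph{generate} a permutation class: a peg permutation is a permutation each of whose entries carries a decoration in $\{+,-,\bullet\}$, and it ``expands'' to the set of permutations obtained by inflating a $+$-entry by an increasing interval, a $-$-entry by a decreasing interval, and a $\bullet$-entry by a single point. By \cite{polynomialenumeration} (building on \cite{geometricgridclasses} and \cite{fibonaccistructure}), a permutation class $\mcC$ is polynomial if and only if it is the union of the expansions of finitely many peg permutations $\rho_1,\dots,\rho_m$.

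The next step is to observe that the expansion of a single peg permutation $\rho$ of length $k$ is, essentially by definition, a generalized grid class on a $k\times k$ (in fact diagonal-type) matrix: place $\incr$, $\decr$, or $\bullet$ in the cell corresponding to each entry of $\rho$ according to its decoration, and empty classes elsewhere. This is almost a \struct{} rule, except that a peg permutation with, say, two adjacent $+$-entries produces the same permutation from more than one gridding (the breakpoint between the two increasing runs is not determined). The fix is the standard one used in this area: pass to the finite set of ``maximal'' or ``non-redundant'' peg permutations — equivalently, further subdivide into rules where consecutive collinear increasing (resp. decreasing) cells are disallowed, so that each permitted gridding is forced. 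Concretely, I would replace each $\rho_i$ by the finitely many peg permutations obtained by deciding, for every maximal block of consecutive equi-decorated monotone entries, exactly which entries are ``filled'' — this is the operation of passing to the downset of peg permutations and is already worked out in \cite{fibonaccistructure}; each resulting peg permutation yields a genuine \struct{} rule because its griddings are unique.

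The final step is to assemble these rules into a \emph{disjoint} cover. After refining as above, the expansions of the resulting peg permutations need not be disjoint, but two such expansions that overlap must overlap in the expansion of a common refinement; running the standard inclusion–exclusion / downset argument on the (finite) poset of refined peg permutations, one extracts a subfamily whose expansions are pairwise disjoint and whose union is still $\mcC$ — intuitively, one keeps, for each permutation of $\mcC$, the rule giving its lexicographically-least forced gridding. The collection of the corresponding \struct{} rules is then a cover of $\mcC$ in the sense defined in Section~\ref{sec:the_algorithm}, together with the empty rule $\onebox$ when $\epsilon\in\mcC$, which proves the theorem.

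The main obstacle I anticipate is bookkeeping the disjointness: a naive diagonal grid matrix for a peg permutation has genuinely non-unique griddings, and making the griddings unique while keeping the union equal to $\mcC$ and the pieces disjoint is exactly the technical heart of the peg-permutation machinery. Since \cite{polynomialenumeration}, \cite{geometricgridclasses} and \cite{fibonaccistructure} already provide the needed finiteness and downset structure, I expect the proof can invoke those results and only needs to (i) translate a non-redundant peg permutation into a \struct{} rule and check uniqueness of griddings, and (ii) note that the finite downset of peg permutations can be sieved into a disjoint cover. Neither (i) nor (ii) should require new ideas, so the proof will be short, with the substance delegated to the cited structural theorems.
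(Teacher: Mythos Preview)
Your outline heads in the right direction but takes a detour the paper avoids, and one of your steps is not quite sound as written.

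The paper's route is shorter because it invokes a sharper statement from \cite{polynomialenumeration} than the one you quote: their Theorem~1.4 already gives a \emph{disjoint} union $\mcC=\bigsqcup_{\rho\in H}\rho[\mcV_\rho]$ where each $\mcV_\rho$ is a convex set of \emph{filling} vectors (dotted entries get exactly one point, signed entries get at least two), and their Proposition~2.3 guarantees that every permutation filling $\rho$ has a unique $\rho$-partition. So both disjointness and uniqueness of griddings are imported wholesale; nothing needs to be reconstructed. The only thing left is to realise ``signed entry with at least two points'' as a \struct{} rule, and the paper does this by the concrete substitution $x^{+}\mapsto(x{-}0.2)^{\circ}(x{-}0.1)^{\circ}x^{+}$ (and analogously for $-$): two mandatory points followed by a possibly empty monotone block. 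That single trick finishes the proof.

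By contrast, your plan starts from the weaker ``union of peg expansions'' statement and then tries to manufacture uniqueness and disjointness by hand. The uniqueness discussion is slightly off target: in a peg permutation no two cells share a row or column, so the ambiguity is not ``consecutive collinear increasing cells'' but rather signed cells receiving $0$ or $1$ points and thereby colliding with neighbouring dotted cells---exactly what the filling convention rules out. More seriously, your sieving step (``keep, for each permutation, the rule giving its lex-least forced gridding'') does not obviously produce \struct{} rules: restricting a grid class to those permutations whose minimal gridding lies there is a subset that need not itself be a grid class, so the pieces you end up with may fail the definition. This can be repaired---indeed \cite{polynomialenumeration} carry out precisely the refinement of the peg permutations that makes it work---but at that point you are reproving their Theorem~1.4 rather than citing it. The cleaner move is to cite the disjoint-union-with-filling result directly and then exhibit the two-point substitution above.
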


In order to prove this, we will recall some definitions used by
\cite{polynomialenumeration}. A \emph{peg permutation} is a permutation where
each letter is decorated with a $+$, $-$ or $\circ$, for example, $\rho =
3^{\circ} 1^- 4^{\circ} 2^+$. Let $M_{\rho}$ be the matrix defined by
\[
  M_{i,j} =
  \begin{cases}
    \av(12) \phantom{-} \text{ if } \rho_i = j^+ \\
    \av(21) \phantom{-} \text{ if } \rho_i = j^- \\
    \{ 1 \} \phantom{llllllll} \text{ if } \rho_i = j^\circ
  \end{cases}
\]
then $\grid(\rho) = \grid(M_{\rho})$.

A peg permutation is, therefore, a geometric grid class with monotone intervals
for its matrix entries. We can specify these intervals with vectors on
$\mathbb{N}$, the non-negative integers. We call this a \emph{$\rho$-partition}.
For example, we could write
\[
  6 321 7 45 = 3^\circ 1^- 4^\circ 2^+[\langle 1,3,1,2 \rangle].
\]
Throughout it is insisted that we use vectors that \emph{fill} peg permutations,
meaning that a component of a vector equals $1$ if it corresponds to a $\circ$
and otherwise is at least $2$. For a set of filling vectors $\mcV$, define
\[
  \rho[\mcV] = \{ \rho[v] : v \in \mcV \}.
\]
Given vectors $v$ and $w$ in $\mathbb{N}^m$, then $v$ is contained in $w$ if
$v(i) \leq w(i)$ for all indices $i$. This is a partial order and moreover, if
$v$ is contained in $w$ then for a length $m$ peg permutation, $\rho[v]$ is
contained in $\rho[w]$, assuming this is defined. A set closed downwards under
containment is called a \emph{downset}, and closed upwards an \emph{upset}. The
intersection of a downset and an upset is called a \emph{convex set}.

The set of vectors which fill a given peg permutation $\rho$ forms a convex set.
The downset component of this convex set consists of those vectors which do not
contain an entry larger than 1 corresponding to a dotted entry of $\rho$. The
upset component consists of those vectors which contain the vector $v$ defined
by $v(i) = 1$ if $\rho(i)$ is dotted and $\rho(i) = 2$ if $\rho(i)$ is signed.
As we discussed in Section~\ref{sec:introduction}, all polynomial permutation
classes can be represented by a finite set of peg permutations. In fact, a more
general condition holds.

We can now state the result from \cite[Theorem 1.4]{polynomialenumeration}.

\begin{theorem}[\cite{polynomialenumeration}, Theorem 1.4 and Proposition 2.3]
  \label{thm:heartofpegimpliesstruct}
  For every polynomial permutation class $\mcC$ there is a finite set $H$ of peg
  permutations, each associated with its own convex set $\mcV_\rho$ of filling
  vectors, such that $\mcC$ can be written as the disjoint union
  \[
    \mcC = \bigsqcup_{\rho \in H} \rho[\mcV_\rho].
  \]
\end{theorem}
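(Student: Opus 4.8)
The plan is to reconstruct the decomposition in two stages: first produce a \emph{covering} of $\mcC$ by finitely many peg permutation grid classes, and then refine that covering into the claimed disjoint union by passing to a canonical representation of each permutation. The first stage is where the two cited ingredients enter, and the second stage is the combinatorial heart that makes the union disjoint.

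First I would invoke the structural characterization of polynomial classes. Combining the refined Fibonacci dichotomy of \cite{fibonaccistructure} with the growth criteria of \cite{polynomialconditions}, a polynomial class $\mcC$ is \emph{geometrically griddable}: it is contained in a geometric grid class $\grid(M)$ of a fixed finite $0/{\pm}1$ matrix $M$ of the restricted (forest-like) type forced by polynomial growth. The reason for passing to a geometric grid class is its combinatorial encoding from \cite{geometricgridclasses}: every permutation of $\grid(M)$ is obtained by cutting the monotone run in each cell into finitely many monotone pieces, and this data is recorded exactly by a peg permutation $\rho$ together with a filling vector $v$, so that the permutation equals $\rho[v]$. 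Since $M$ is fixed and finite, only finitely many \emph{reduced} peg permutations $\rho$ (those in which no signed run can be split further and no run of length one is recorded as a signed entry) can arise; these will form the finite set $H$. For each such $\rho$, because $\mcC$ is downward closed and $v \preceq w$ implies $\rho[v] \preceq \rho[w]$, the set $\{\,v \text{ fills } \rho : \rho[v] \in \mcC\,\}$ is a downset intersected with the convex fill set recalled above, hence a convex set $\mcV_\rho$. This already yields $\mcC = \bigcup_{\rho \in H} \rho[\mcV_\rho]$ as a (not yet disjoint) union.

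The main obstacle, and precisely the content of the Proposition~2.3 part of the statement, is disjointness. A single $\pi \in \mcC$ may be expressible as $\rho[v]$ for several pairs $(\rho,v)$, since a signed cell of length at least $2$ can be subdivided into shorter monotone runs, or a run of length one can be read either as a point or as a degenerate signed entry. To eliminate the overlaps I would assign to each $\pi$ its unique reduced representation — merge maximal monotone runs, and never record a signed entry where a point suffices (this is exactly the role of the \emph{fill} condition, which forbids a signed component from taking value $1$). The crux is to show that this reduction is well defined, so that $\pi \mapsto (\rho,v)$ is a bijection between $\mcC$ and the pairs with $\rho$ reduced and $v$ a filling vector of $\rho$. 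Granting this, the reduced peg permutations are exactly the members of $H$, the fibre of fillings producing a fixed $\rho$ is exactly the convex set $\mcV_\rho$, and uniqueness of the reduced form is what converts the union into the disjoint union $\mcC = \bigsqcup_{\rho \in H} \rho[\mcV_\rho]$.

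I expect the delicate point to be verifying that reducedness is canonical and that the fibres are genuinely convex: one must check that between the upset bound (every signed entry of $\rho$ forces a run of length at least $2$) and the downset bound (no entry may grow so large that the run would break into pieces demanded by a finer peg permutation), the admissible fillings are precisely those sandwiched in the containment order on vectors described in the excerpt. This is where the partial order on filling vectors, rather than any growth estimate, does the real work; the finiteness of $H$ and the union form are comparatively routine once the geometric griddability of $\mcC$ is in hand.
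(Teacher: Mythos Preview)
The paper does not prove this theorem at all: it is quoted verbatim as a result of \cite{polynomialenumeration} (their Theorem~1.4 combined with Proposition~2.3) and used as a black box to deduce the next theorem. There is therefore nothing in the paper to compare your argument against; your proposal is a reconstruction of the proof in the cited reference, not of anything the present authors wrote.

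That said, your sketch is broadly faithful to the actual argument in \cite{polynomialenumeration}, with one wrinkle worth flagging. Your description of the downset component of $\mcV_\rho$ (``no entry may grow so large that the run would break into pieces demanded by a finer peg permutation'') is not quite the right source of the downward closure. The downset constraint comes from $\mcC$ itself being a permutation class: since $v \preceq w$ implies $\rho[v] \preceq \rho[w]$, the set of filling vectors $v$ with $\rho[v] \in \mcC$ is automatically a downset in the componentwise order, and intersecting with the convex set of vectors that fill $\rho$ yields the convex $\mcV_\rho$. The ``compactification'' step you describe, merging runs and replacing length-one signed entries by dots to obtain a canonical reduced peg permutation, is indeed what establishes disjointness, and is the content of Proposition~2.3 in the reference; the paper under review simply cites that fact in the sentence immediately following the theorem.
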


In \cite[Proposition 2.3]{polynomialenumeration} the authors show that every
permutation which fills a peg permutation $\rho$ has a unique $\rho$-partition.
Together with Theorem~\ref{thm:heartofpegimpliesstruct}, this leads to the
following result.

\begin{theorem}
  A peg permutation $\rho$ and its convex set $\mcV_\rho$ of filling
  vectors is a \struct{} rule.
\end{theorem}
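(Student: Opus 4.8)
The plan is to verify that a peg permutation $\rho$ together with its convex set $\mcV_\rho$ of filling vectors satisfies the definition of a \struct{} rule, namely that it is a matrix with entries permutation classes or the point $\bullet$, and that every permutation in its grid class has a \emph{unique} gridding. First I would observe that, via the correspondence $\grid(\rho) = \grid(M_\rho)$, the object $\rho$ is literally a matrix whose entries are $\av(12)$, $\av(21)$, or $\{1\}$, all of which are permitted entries of a \struct{} rule ($\av(21) = \incr$, $\av(12) = \decr$, and $\{1\} = \bullet$). So the only substantive thing to check is the uniqueness of griddings. The key point is that restricting to filling vectors in $\mcV_\rho$ has the effect of restricting $\grid(M_\rho)$ to exactly those permutations $\pi$ admitting an $M_\rho$-gridding in which the cell contents genuinely fill the cells in the sense demanded by the $\rho$-partition formalism: a monotone cell of length at least $2$, a point cell of length exactly $1$. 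This is the same set as $\rho[\mcV_\rho]$.

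Next I would appeal directly to \cite[Proposition 2.3]{polynomialenumeration}, quoted in the excerpt, which states that every permutation filling a peg permutation $\rho$ has a unique $\rho$-partition. A $\rho$-partition of $\pi$ is precisely a choice of filling vector $v$ with $\pi = \rho[v]$, and such a choice determines (and is determined by) the sequences $c_1 \le \dots \le c_{t+1}$ and $r_1 \le \dots \le r_{u+1}$ that cut $\pi$ into the cells dictated by $\rho$ — that is, an $M_\rho$-gridding of the constrained type. Hence the map from $\rho$-partitions of $\pi$ to griddings of $\pi$ witnessing membership in $\rho[\mcV_\rho]$ is a bijection, and uniqueness of the former yields uniqueness of the latter. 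A small wrinkle to address: a general $M_\rho$-gridding need not be a ``filling'' gridding, since the grid class definition allows empty cells and does not force monotone cells to have length $\ge 2$; but because we have passed to $\rho[\mcV_\rho]$ rather than all of $\grid(M_\rho)$, every permutation under consideration \emph{does} admit a filling gridding, and two distinct filling griddings would give two distinct $\rho$-partitions, contradicting Proposition~2.3. One should also note that a non-filling gridding of a permutation that happens to lie in $\rho[\mcV_\rho]$ is simply not an $\mcR$-gridding for the rule $\mcR = (\rho, \mcV_\rho)$, because the rule's ambient matrix is exactly $M_\rho$ with the side condition encoded by $\mcV_\rho$; so it does not threaten uniqueness.

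The main obstacle I anticipate is bookkeeping rather than mathematical depth: one must be careful to reconcile two descriptions of the same rule — the ``peg permutation with convex set of filling vectors'' description used by \cite{polynomialenumeration}, and the ``matrix of permutation classes with unique-gridding property'' description of a \struct{} rule — and in particular to confirm that the side condition ``$v \in \mcV_\rho$'' is exactly what is needed to make griddings unique, neither too weak (which would leave multiple griddings) nor incompatible with the grid-class framework. Since the excerpt has already set up both descriptions and quoted Proposition~2.3, this reconciliation is short, and the proof reduces to the two sentences: $M_\rho$ has admissible entries, and Proposition~2.3 gives the bijection between $\rho$-partitions and (filling) griddings, hence uniqueness.
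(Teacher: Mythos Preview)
Your proposal has a genuine gap: you never produce an actual matrix realizing the rule. A \struct{} rule, as defined in this paper, is a matrix $\mcR$ (entries permutation classes or $\bullet$) with the property that every permutation in $\grid(\mcR)$ has a unique $\mcR$-gridding. You take the matrix to be $M_\rho$ itself and then impose the side condition $v \in \mcV_\rho$. But $M_\rho$ alone is not the right object: $\grid(M_\rho)$ is strictly larger than $\rho[\mcV_\rho]$ (it contains permutations where a signed cell receives $0$ or $1$ points), and $M_\rho$-griddings are typically not unique. Your workaround---declaring that non-filling griddings ``are simply not $\mcR$-griddings for $\mcR = (\rho, \mcV_\rho)$''---amounts to redefining ``gridding'' so as to carry the side condition along, which falls outside the paper's definition: a \struct{} rule is just a matrix, with no auxiliary constraint attached.

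The paper's proof supplies exactly this missing construction. It builds a new peg permutation $\rho'$ by replacing each signed entry $x^{+}$ with the block $(x-0.2)^\circ\,(x-0.1)^\circ\,x^{+}$ (and analogously $y^{-} \mapsto (y+0.2)^\circ\,(y+0.1)^\circ\,y^{-}$) and then standardizing. In the resulting matrix $M_{\rho'}$ each former signed cell is preceded by two mandatory point cells, so any $M_{\rho'}$-gridding automatically places at least two points into what was the signed region of $\rho$; the filling constraint is now encoded in the matrix itself rather than carried as a side condition. With that in hand, Proposition~2.3 of Homberger--Vatter (uniqueness of filling $\rho$-partitions) transfers to uniqueness of $M_{\rho'}$-griddings, and $M_{\rho'}$ is a bona fide \struct{} rule with $\grid(M_{\rho'}) = \rho[\mcV_\rho]$. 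That explicit construction of $\rho'$ is the idea your proposal lacks.
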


\begin{proof}
  For a peg permutation, $\rho$, $V_\rho$ consists of the vectors with
  $\rho(i) = 1$ when $v(i)$ is dotted and the remaining elements an integer
  greater than $1$. We create $\rho'$ from $\rho$ by replacing an entry $x^+$ in
  $\rho$ with the subsequence $(x-0.2)^\circ (x-0.1)^\circ x^+$ and entries
  $y^-$ with the subsequence $(y+0.2)^\circ (y+0.1)^\circ y^-$ and taking the
  standardization of the underlying permutation. The set $\grid(M_{\rho'})$ is a
  generalized grid class. Moreover, it is a \struct{} rule since every
  permutation which fills $\rho$ has a unique $\rho$-partition.
\end{proof}

Theorem~\ref{thm:pegimpliesstruct} follows as a corollary, as we now have a
finite set of \struct{} rules. Given there exists a cover for all polynomial
permutation classes a natural follow-up question is: for a polynomial
permutation class $\mcC$ is there a bound on the size of the \struct{} rules
required in a cover for $\mcC$?

\subsection{Comparison with other algorithms}

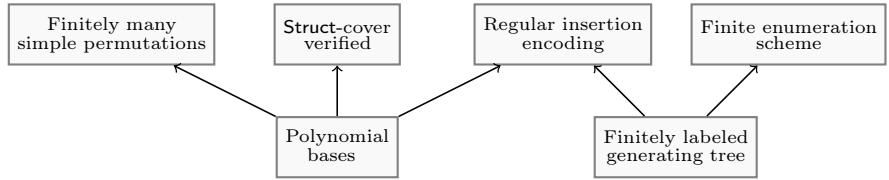
\begin{figure}[h]
 \begin{center}
\begin{tikzpicture}[scale = 0.75, place/.style = {rectangle,draw = black!50,fill = gray!5,thick, minimum size=0.8cm}, auto]

 \node [place] (struct) 	at (2,1)       {$\substack{\text{\struct{}-cover} \\ \text{verified}}$};
 \node [place] (poly) 	    at (2, -1)   {$\substack{\text{Polynomial} \\ \text{bases\phantom{p}}}$};
 \node [place] (gentree) 	at (8,-1)   {$\substack{\text{Finitely labeled} \\ \text{generating tree}}$};
 \node [place] (rie) 	    at (6,1)   {$\substack{\text{Regular insertion} \\ \text{encoding}}$};
 \node [place] (scheme)     at (10,1)  {$\substack{\text{Finite enumeration} \\ \text{scheme}}$};
 \node [place] (simple)     at (-2,1)    {$\substack{\text{Finitely many} \\ \text{simple permutations}}$};

 \draw [->,semithick] (poly) to (struct);
 \draw [->,semithick] (poly) to (rie);
 \draw [->,semithick] (poly) to (simple);
 \draw [->,semithick] (gentree) to (rie);
 \draw [->,semithick] (gentree) to (scheme);

\end{tikzpicture}
 \caption{Comparison of enumeration methods}
 \label{fig:overview}
 \end{center}
\end{figure}

In the remainder of this section, we collect examples of permutation classes
which separate the automatic methods mentioned in the introduction. Regular
insertion encodable permutation classes are always rational. This implies that
$\av(231)$ does not have a regular insertion encoding. As we saw in
Section~\ref{fig:231} this permutation class has a \struct{} cover. It has a
finite enumeration scheme detailed by \cite[p.~7-10]{enumerationschemes} and has
finitely many simple permutations and, therefore, \cite[Theorem
8]{substitutiondecomposition} tells us it can be enumerated using the
substitution decomposition. For $\av(123)$ only finite enumeration schemes have
success, given in plain English by \cite[p.~3-5]{originalenumerationschemes}.

The permutation class $\av(123, 3214)$ has a finitely labeled generating tree
given by \cite[p.~10]{finitelabeled}, but we have not found a \struct{} cover
for it. The permutation class $\av(321, 2143)$, however, does have a cover shown
in Section~\ref{sec:321_2143}, although it can not be enumerated using any of
the other automatic methods.

We have from Theorem~\ref{thm:pegimpliesstruct} that all polynomial permutation
classes have a cover. They are also regular insertion encodable. With a quick check
(for example using the conditions given by \cite[Theorem
1]{polynomialconditions}) we see that $\av(1234, 4231)$ is a polynomial
permutation class, therefore is regular insertion encodable and has a cover. As
was shown by \cite[Proposition 7.1]{enumerationschemes} this permutation class
does not have an enumeration scheme.

The separable permutations $\av(2413,3142)$ have finitely many simple
permutations and so by \cite[Theorem 8]{substitutiondecomposition} they can be
enumerated using the substitution decomposition. This permutation class can not
be enumerated using any of the other methods.

\begin{table}[ht]
\scriptsize
  \centering
  \begin{tabular}{|c|c|c|c|c|c|c|}
      \hline
      basis        & Polynom. & Fin.\ gen.\ tree & Reg.\ ins.\ enc. & \struct{} cover & Fin.\ enum.\ scheme & Fin.\ many simple perms. \\
      \hline
      $132$        &            &                  &                  &   \checkmark   &   \checkmark   &  \checkmark   \\
      \hline
      $123$        &            &                  &                  &        &   \checkmark    &      \\
      \hline
      $123, 3214$  &            & \checkmark     &  \checkmark   &        &   \checkmark   &      \\
      \hline
      $321, 2143$  &            &                  &                  &   \checkmark    &       &     \\
      \hline
      $1234, 4231$ & \checkmark    &         &  \checkmark   &   \checkmark    &        &  \checkmark  \\
      \hline
      $2413, 3142$ &            &                  &                  &        &        &  \checkmark \\
      \hline
  \end{tabular}
  \caption{Bases separating the methods}
  \label{tab:sep_examples}
\end{table}

\section{Patterns of length four}
  \label{sec:large_bases_of_length_four_patterns}

The enumeration and Wilf-classification are known for all permutation classes
with a subset of $\mcS_3$ for a basis. As mentioned, there exists a \struct{}
cover for all of these classes except $\av(123)$, which we revisit in
Section~\ref{sec:further_improvements}.

We will now look at the results of applying the algorithm to bases containing
length four patterns. The total number of bases is $2^{24}$, but of course, it
suffices to look at one basis from each symmetry class. This brings the total
down to $2\,097\,152 \approx 2^{21}$ bases.\footnote{In general one can count subsets of
$S_n$ with respect to the symmetries of the square. This was added to the OEIS
by the authors (\oeis{A277086}). It follows from a simple application of
Burnside's Lemma.} In Section~\ref{sec:polynomial_bases} we showed that all
polynomial permutation classes have a cover. We, therefore, look at the
non-polynomial classes, bringing the total down to $157\,736 \approx 2^{17}$
bases.

As is to be expected \struct{} does better on bases with many patterns. In
Table~\ref{tab:large_bases_all} we break down the computer runs by the size of
the basis and the size of the largest dimension of the \struct{} rules required
for the cover.\footnote{Most of the computation was done on a cluster owned by
Reykjavik University, and the remainder on a cluster owned by the University of
Iceland.} All of the non-polynomial bases were tried with a cover with maximum
rule size of $7 \times 7$. We consider the permutation class failed if \struct{}
does not find a cover within this bound. Of course, there may exist covers using
larger rules.

    \begin{table}[ht]
    \scriptsize
      \centering
      \begin{tabular}{|c|c|c|c|c|c|c|c|c|c|}
          \hline
                &                 &                 &             & \multicolumn{6}{| c |}{successes}\\
          \hline
          $|B|$ & non-symmetric\  & non-polynomial\ & failures \  & $2\times2$ & $3\times3$ & $4\times4$ & $5\times5$ & $6\times6$ & $7\times7$ \\
          \hline
          24 & 1      & 0      & 0    & 0 & 0 & 0 & 0 & 0 & 0 \\
          23 & 7      & 0      & 0    & 0 & 0 & 0 & 0 & 0 & 0 \\
          22 & 56     & 0      & 0    & 0 & 0 & 0 & 0 & 0 & 0 \\
          21 & 317    & 0      & 0    & 0 & 0 & 0 & 0 & 0 & 0 \\
          20 & 1524   & 0      & 0    & 0 & 0 & 0 & 0 & 0 & 0 \\
          19 & 5733   & 1      & 0    & 0 & 1 & 0 & 0 & 0 & 0 \\
          18 & 17728  & 9      & 0    & 0 & 8 & 1 & 0 & 0 & 0 \\
          17 & 44767  & 58     & 0    & 0 & 32 & 26 & 0 & 0 & 0 \\
          16 & 94427  & 285    & 0    & 0 & 75 & 206 & 4 & 0 & 0 \\
          15 & 166786 & 1069   & 0    & 0 & 118 & 901 & 49 & 1 & 0 \\
          14 & 249624 & 3143   & 0    & 0 & 137 & 2620 & 377 & 9 & 0 \\
          13 & 316950 & 7338   & 0    & 0 & 122 & 5118 & 2038 & 60 & 0 \\
          12 & 343424 & 13891  & 0    & 1 & 82 & 6372 & 7163 & 273 & 0 \\
          11 & 316950 & 21451  & 1    & 0 & 36 & 4890 & 15551 & 970 & 3 \\
          10 & 249624 & 27274  & 12   & 0 & 9 & 2285 & 21947 & 2990 & 31 \\
          9  & 166786 & 28391  & 59   & 0 & 1 & 615 & 19672 & 7856 & 188 \\
          8  & 94427  & 24160  & 177  & 6 & 0 & 85 & 9956 & 13051 & 885 \\
          7  & 44767  & 16489  & 708  & 0 & 0 & 10 & 2267 & 10924 & 2580 \\
          6  & 17728  & 8935   & 3249 & 0 & 0 & 2 & 167 & 3668 & 1849 \\
          5  & 5733   & 3716   & 2597 & 0 & 0 & 0 & 7 & 331 & 781 \\
          4  & 1524   & 1187   & 1160 & 3 & 0 & 0 & 1 & 8 & 15 \\
          3  & 317    & 279    & 279  & 0 & 0 & 0 & 0 & 0 & 0 \\
          2  & 56     & 53     & 53   & 0 & 0 & 0 & 0 & 0 & 0 \\
          1  & 7      & 7      & 7    & 0 & 0 & 0 & 0 & 0 & 0 \\
          \hline
      \end{tabular}
      \caption{Data from running \struct{} on every non-polynomial basis with
      length four patterns}
      \label{tab:large_bases_all}
    \end{table}

Judging from the data in the table, one would have hoped to get some successes
for bases with three patterns. However, at this point the memory usage becomes
infeasible for the computers we have access to, routinely exceeding 32GiB of
memory. We, therefore, expect there to be some such permutation classes with
covers consisting of $7\times7$ rules, although we can not find them at this
point.

\subsection{A collection of successes}

The longest basis of length four patterns corresponding to a non-polynomial
permutation class has $19$ patterns
\begin{align*}
\mcH_1 = \av( &1234, 1243, 1324, 1342, 1423, 1432, 2134, 2143, 2314,\\
      &2341, 2413, 2431, 3124, 3142, 3214, 3241, 4123, 4132, 4213).
\end{align*}
\struct{} finds the cover
\begin{equation} \label{eq:largest_non_poly}
    \mcH_1 = \strule{\struleds}{1/1}[0/0/$\mcF$] \sqcup
    \strule{\struleds}{3/2}[0/0/$\bullet$, 1/1/$\bullet$, 2/1/$\bullet$] \sqcup
    \strule{\struleds}{3/3}[0/1/$\bullet$, 1/0/$\bullet$, 2/2/$\bullet$]
\end{equation}
where $\mcF = \av(123, 132, 213)$. We, therefore, see that $\mcH_1$ consists of
permutations avoiding $123, 132, 213$, with the addition of those same
permutations. This can also be observed directly from the basis of $\mcH_1$: it
consists of all length four patterns containing at least one of the patterns in
the basis of $\mcF$.

In the column corresponding to size $2\times2$ rules, we see three successful
bases with $4$ patterns, six bases with $8$ patterns and one basis with $12$
patterns. All of the covers found are similar, e.g., one of the size $8$ bases
is
\[
    \mcH_2 = \av(1243, 1324, 2143, 2314, 3142, 3214, 4132, 4213)
\]
and has the cover
\begin{equation} \label{eq:length8_small}
    \mcH_2 = \onebox \sqcup
    \strule{\struleds}{2/1}[0/0/$\bullet$, 1/0/$\mcE_{3,3}^r$]
\end{equation}
where $\mcE_{3,3}^r = \av(132, 213)$. The enumeration is therefore given by
$n 2^{n-2}$. The covers for the three successful bases with $4$ patterns are
essentially the same as some of the structures discussed in \cite{bruner}.

There are thirteen conjectures given by \cite{kuszmaul} about the enumeration of
bases containing many length four patterns, of which ten have a regular
insertion encoding. \struct{} found covers for these as well as one of the
remaining three:
\[
    \mcH_3 = \av(2431, 2143, 3142, 4132, 1432, 1342, 1324, 1423, 1243)
\]
\struct{} found the cover
\begin{equation} \label{eq:Kuz2}
    \mcH_3 = \strule{\struleds}{1/1}[0/0/$\mcA^r$] \sqcup
    \strule{\struleds}{4/4}[0/1/$\bullet$, 1/3/$\bullet$, 2/0/$\mcA^r$, 3/2/$\bullet$]
\end{equation}
where $\mcA^r = \av(132)$. This leads to the equation
$H_3(x) = A(x) + x^3 A(x)$ (\oeis{A071742}), where $A(x)$ is
the generating function for the Catalan numbers.

Although it is not a focus of this paper we show how this cover can be used to
enumerate the class with respect to some permutation statistics. If we let
$y$ track the number of left-to-right-minima,
\[
  H_3(x,y) = A(x,y) + x^3 y A(x,y).
\]
Then if we let $z$ track the number of inversions then
\[
  H_3(x,z) = A(x,z) + x^3 z A(xz^2,z).
\]
Finally, if we let $w$ track the number of descents then
\[
  H_3(x,w) = A(x,w) + x^3 A(x,w).
\]

To highlight some interesting structures found by \struct{} we end this section
by showing the covers found for three permutation classes with a basis
consisting of four length four patterns. For the permutation class $\mcH_4 =
\av(1324, 1342, 3124, 3142)$ the cover found was
\[
  \mcH_4 = \onebox \sqcup
  \strule{\struleds}{2/2}[1/0/$\bullet$, 0/1/$\mcH_4$] \sqcup
  \strule{\struleds}{4/6}[0/5/$\mcH_4$, 1/1/$\bullet$, 2/0/$\decr$, 2/2/$\incr$, 2/4/$\decr$, 3/3/$\bullet$]
\]
which leads to the equation
$
  H_4(x) = 1 + x H_4(x) + \frac{x^2 H_4(x)}{1-3x}
$
and upon solving gives
$
  H_4(x) = \frac{1 - 3x}{1 - 4x+ 2x^2}
$
(\oeis{A006012}, shifted).

The permutation class $\mcH_5 = \av(1342, 2314, 2413, 3142)$ was enumerated by
\cite[Theorems 18 and 19]{wreathproduct} using methods with the wreath product.
\struct{} finds the cover
\[
  \mcH_5 = \onebox \sqcup
  \strule{\struleds}{2/2}[0/1/$\mcH_5$,1/0/$\bullet$] \sqcup
  \strule{\struleds}{6/6}[0/5/$\mcH_5$, 1/1/$\bullet$, 2/0/$\mcA$, 3/2/$\mcA$, 4/4/$\bullet$, 5/3/$\mcA$]
\]
where $\mcA = \av(231)$. This leads to the equation
$
  H_5(x) = 1 + xH_5(x) + x^2 H_5(x) A(x)^3
$
and to the same function first found by \cite{wreathproduct} (\oeis{A078483}).

Choosing a permutation at random from a permutation class is, in general, a very
difficult problem. For example, the problem of what does a typical permutation
in a class look like was initiated by \cite{randominitiated}, and, for example,
further studied by \cite{randomminer} and \cite{randomsample}.

We do not address this problem in any generality here but for the cover for
$\mcH_5$ above we can use the following method: first, solve the functional
equation for $H_5(x)$ above to obtain an enumeration of all the permutations in
the class. Then find the enumeration of the permutations in the grid classes of
the individual rules. These are, ignoring the empty rule, $[x^n] x H_5(x)$ and
$[x^n] x^2 A(x)^3 H_5(x)$. These give us weights for a probability distribution
between the rules. Then we need to be able to choose a permutation uniformly at
random from a given rule. We only look at the larger rule here. We use variables
$a$, $b$, $c$, $d$, to track the number of points in each of the blocks
containing a permutation class, read from left to right. This gives the
generating function $x^2H_5(ax)A(bx)A(cx)A(dx)$. The probability of choosing a
permutation with $i$, $j$, $k$, $\ell$ in each of the blocks is now seen to be \[
\frac{[x^n a^i b^j c^k d^\ell] x^2H_5(ax)A(bx)A(cx)A(dx)}{[x^n] x^2 A(x)^3
H_5(x)}. \] We then need to recursively apply this method to choose a length $i$
permutation from $\mcH_5$ and choose random permutations of lengths $j$, $k$,
$\ell$ from $\mcA$. This method was formalized by \cite{randomsamplemethod}.

The enumeration of the permutation class $\mcH_6 = \av(1342,2413,3124,3142)$ is
not on the OEIS. The cover found by \struct{} is
\[
  \mcH_6 = \onebox \sqcup
  \strule{\struleds}{2/2}[0/1/$\mcH_6$, 1/0/$\bullet$] \sqcup
  \strule{\struleds}{6/6}[0/5/$\mcH_6$, 1/1/$\mcG_{9,4}^c$, 2/0/$\bullet$, 3/2/$\mcE_{3,3}$, 4/4/$\bullet$, 5/3/$\mcG_{9,4}^r$]
\]
where $\mcG_{9,4}^c = \av(312, 1342)$ and $\mcG_{9,4}^r = \av(231, 3124)$.  This
can be easily verified and leads to the equation
$
  H_6(x) = 1 + xH_6(x) + x E_{3,1}(x) G_{9,1}(x)^2.
$
Solving gives $H_6(x) = \frac{1 - 6x + 11x^2 - 6x^3 + x^4}{1 - 7x + 16x^2 -
14x^3 + 5x^4 - x^5}$.

This cover is typical of many found by \struct{}, in that it places into the
rule a permutation class of the form $\av(P, Q)$ where $P$ is a non-empty subset
of $\mcS_3$ and $Q$ is a subset of $\mcS_4$.\footnote{Up to symmetries there are
$14\,181$ such bases.} It is known that all such classes have a rational
generating function, see \cite{rational321} and
\cite{substitutiondecomposition}, but they have not been computed. We believe
that these should all be computed as a first step in attempting to enumerate all
permutation classes containing length four patterns.

\subsection{Discussion of the failures}
The permutation class with the largest basis that \struct{} fails to find a cover for
using $7\times7$ rules, has $11$ patterns,
namely
\[
    \av(1234, 1243, 1324, 1342, 1423, 1432, 2134, 2143, 2314, 2341, 3214).
\]
It can, however, be done with the same methods as are needed for the \struct{}
failures we discuss in Section~\ref{sec:further_improvements}. This leads to the
generating function $\frac{1 - x + x^3}{1 - 2x - x^3}$.

After the first failure, the failure rate stays below $1\%$ for bases with at
least $8$ patterns, rising to $5\%$ for $7$ patterns, $37\%$ for $6$ patterns,
$70\%$ for $5$ patterns, $98\%$ for $4$ patterns, until every basis with $3$ or
fewer patterns fails.



\section{Further improvements}
  \label{sec:further_improvements}

Given a conjectured cover from \struct{} for a permutation class $\mcC$, it is
natural to want to check that this is, in fact, a cover for $\mcC$. Given an
individual \struct{} rule it can be verified automatically if it generates a
subset of a permutation class $\mcC$: If there is a permutation $\pi$ in the
\struct{} rule $\mcR$ which contains a basis element of length $k$, remove all the
non-points from the gridding of the permutation so that the permutation still
contains an occurrence of the basis element. This reduced permutation has at
most $\ell + k$ points. Therefore if $\mcR_{\leq \ell + k} \subseteq \mcC$,
where $k$ is the length of the longest basis element, then $\mcR \subseteq
\mcC$. This brings us closer to an automatic check if a cover found by \struct{} is
guaranteed to be a subset of the permutation class $\mcC$. What remains is there
might still be an overlap between the permutations generated by two rules. Below
we sketch how the steps one usual performs to verify the cover by hand, can be
turned into an algorithm.

\subsection{Proof Trees}

To motivate the section we revisit the proof of
Proposition~\ref{prop:132_4312+132_4231}, concerning the permutation class
$\mcG_{7,2} = \av(132, 4231)$, and turn the verification of the cover into a
``proof tree'' in Figure~\ref{fig:132_4231}.

We start with a root vertex representing $\mcG_{7,2}$. The empty permutation is
in $\mcG_{7,2}$, and we add a left child of the root indicating this. All other
permutations have a topmost point and we add the right child to represent them.
This is analogous as to saying a non-empty permutation can be written as $\alpha n
\beta$. In our tree, we observe that the right part must avoid $231$, and so be
in $\av(132,231)$. We continue with our argument, for permutations where $\beta$
is empty, $\alpha$ can be any permutation in $\mcG_{7,2}$ so we indicate this by
adding a left child of the node we are on. Otherwise, $\beta$ has a leftmost
point, and we add a right child to represent them. We observe that the points to
the right and above the leftmost point must be in $\av(21)$, and to the right
and below must be in $\av(132,231)$. We have now reached the left tree in
Figure~\ref{fig:132_4231}.

\begin{figure}[htbp]
    \centering
        \centering
        \begin{tikzpicture}[scale = 0.75, vertex/.style = {}, auto]

         \node [vertex] (root) 	 at (0,0)     {\strule{\struleds}{1/1}[0/0/$\mcG_{7,2}$]};
         \node [vertex] (empty)  at (-1,-2)   {$\onebox$};
         \node [vertex] (nempty) at (1,-2)    {\strule{\struleds}{3/2}[0/0/$\mcG_{7,2}$, 2/0/$\mcEi_{3,2}$, 1/1/$\bullet$]};
         \node [vertex] (nemptyl) at (-1,-5)    {\strule{\struleds}{2/2}[0/0/$\mcG_{7,2}$, 1/1/$\bullet$]};
         \node [vertex] (nemptyr) at (2,-5)    {\strule{\struleds}{4/4}[0/2/$\mcE_{3,2}$, 1/3/$\bullet$, 2/1/$\bullet$, 3/0/$\mcEi_{3,2}$, 3/2/$\mcI$]};

         \draw [-,semithick] (root) to (empty);
         \draw [-,semithick] (root) to (nempty);
         \draw [-,semithick] (nempty) to (nemptyl);
         \draw [-,semithick] (nempty) to (nemptyr);

         \begin{scope}[xshift=10cm]
             \node [vertex] (root) 	 at (0,0)     {\strule{\struleds}{1/1}[0/0/$\mcG_{7,2}$]};
             \node [vertex] (empty)  at (-1,-2)   {$\onebox$};
             \node [vertex] (nempty) at (1,-2)    {\strule{\struleds}{3/2}[0/0/$\mcG_{7,2}$, 2/0/$\mcEi_{3,2}$, 1/1/$\bullet$]};
             \node [vertex] (nemptyl) at (-1,-5)    {\strule{\struleds}{2/2}[0/0/$\mcG_{7,2}$, 1/1/$\bullet$]};
             \node [vertex] (nemptyr) at (2,-5)    {\strule{\struleds}{5/5}[0/3/$\mcE_{3,2}$, 1/4/$\bullet$, 2/1/$\bullet$, 3/0/$\mcEi_{3,2}$, 4/2/$\mcI$]};

             \draw [-,semithick] (root) to (empty);
             \draw [-,semithick] (root) to (nempty);
             \draw [-,semithick] (nempty) to (nemptyl);
             \draw [-,semithick] (nempty) to (nemptyr);
         \end{scope}
        \end{tikzpicture}
    \caption{The structure of $\av(132, 4231)$.}
    \label{fig:132_4231}
\end{figure}
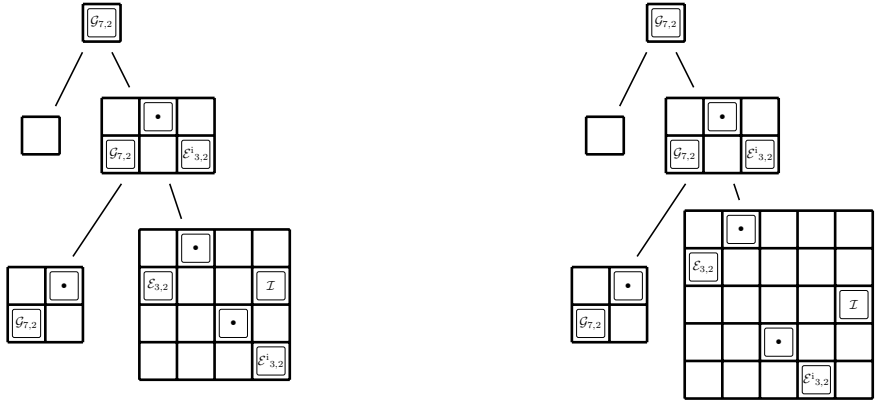

\noindent
The only ``proof strategy'' (PS) used so far is
\begin{enumerate}
  \item[(PSa)]
   Let $v$ be a leaf with some cell $c$ containing a non-empty permutation
   class. Create two children $v^\ell$, $v^r$ of $v$ where in $v^\ell$ the empty
   permutation was chosen in $c$; and in $v^r$ an extreme point (right, top,
   left or bottom) has been inserted. Infer as much information as possible
   about the newly created generalized grid class rule in $v^r$.
\end{enumerate}

\noindent
We continue with our example, and observe that there can not be a non-inversion
between the cells containing $\mcG_{7,2}$ and $\mcI$, because an occurrence of
$132$ would be formed. We can, therefore, draw a horizontal line between these
cells and separate them. We also observe there can not be an inversion between
the cells containing $\mcI$ and $\mcEi_{3,2}$, otherwise, an occurrence of $4231$
would be formed. We can, therefore, draw a vertical line between these cells and
separate them. We have now reached the right tree in Figure~\ref{fig:132_4231}.

\noindent
The proof strategy we used was
\begin{enumerate}
	\item[(PSb${}^\textrm{ri}$)] Let $v$ be a leaf with only two non-empty cells
    $s$ and $t$ in a particular row. If an inversion crossing between $s$ and
    $t$ creates a pattern in the basis, draw a horizontal line splitting the
    row. Let $s_\uparrow$, $s_\downarrow$, and $t_\uparrow$, $t_\downarrow$, be
    the cells $s$ and $t$ split into, then $s_\uparrow = t_\downarrow = {\epsilon}$,
    and infer as much information as possible about the newly created cells.
\end{enumerate}

\noindent
There are three other versions of this proof strategy, about cells in the same
column and insertion of a non-inversion. We collectively call them (PSb).\\

Applying the proof strategies (PSa) and (PSb) creates trees with the
property that the disjoint union of children is a superset of their parent.
We will refer to this as the \emph{superset property}.

From the superset property, it follows that at each stage the leaves form a
superset, in the form of a disjoint union, of the root. Leaves that are subsets
of the root, i.e., avoid the patterns in the basis will be called
\emph{verified}. If every leaf is verified, then the root is a disjoint union of
the sets represented by the leaves.

For all the covers seen in this paper so far, it is possible to draw a similar
proof tree using (PSa) and (PSb). We will now look at bases \struct{} did not
find a cover for. In order to find proof trees, we need a new proof strategy.

\begin{enumerate}
	\item[(PSc)] Let $\av(B)$ be the set under consideration. Let $v$ be a
    non-verified leaf in a tree. Let $v'$ be a subset of cells in
    $v$ satisfying the following two conditions:
    \begin{enumerate}
        \item[(1)] When every other cell is deleted, what remains is an
        ancestor vertex $u$ of $v$.
        \item[(2)] Let $c$ be a placement of points in $u$ resulting in a permutation
        in $\av(B)$ and let $c'$ be the corresponding placement of points in
        $v'$. Then any completion of the placement $c'$ to a placement on $v$
        results in a permutation in $\av(B)$.
    \end{enumerate}
    We declare the leaf as verified and modify the permutations represented by
    $v$ by using only placements of the form defined in condition (2).
\end{enumerate}

\noindent
When (PSc) is applied to a subset $v'$ of a vertex $v$ we add a dotted edge from
the ancestor vertex $u$ to $v$. This is not a part of the tree, just to keep
track of where (PSc) has been applied. With our new proof strategy, we now
revisit some bases that \struct{} did not find a cover for.

\subsection{One pattern of length three and one of length four - revisited}

In Section~\ref{sec:large_bases_of_length_four_patterns} we found covers for
almost all of the bases with one length three pattern and one length four
pattern. The covers we found can all be converted into proof trees. There
were two bases that \struct{} could not find a cover for. There does exist a
proof tree for these bases. Both arguments are similar and we will only provide
a proof for the structure found in the first case.

\begin{proposition}[\cite{generatingtrees}]
    \label{prop:321_2341+321_3412} The structures of the permutation classes
    $\mcG_{9,8} = \av(321, 2341)$ and $\mcG_{9,9} = \av(321, 3412)$ are given by
    the proof trees in Figure~\ref{fig:321_2341+321_3412}. Their generating
    functions are equal to $G_{9,1}(x)$.
\end{proposition}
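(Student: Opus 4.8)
The plan is to build, for each of the two classes, an explicit proof tree of the kind set up above (these are the trees of Figure~\ref{fig:321_2341+321_3412}), using (PSa) and (PSb) to grow the tree and the new strategy (PSc) to close it off. Some such terminating device is forced: exactly as for $\av(123)$, repeated application of (PSa) alone produces an ever-growing staircase of monotone cells and never reaches a leaf whose grid class is a subset of the class, so (PSa) and (PSb) on their own cannot finish the job. I would treat $\mcG_{9,8} = \av(321, 2341)$ in detail, the case $\mcG_{9,9} = \av(321, 3412)$ being entirely analogous, as claimed in the statement.

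First I would apply (PSa) at the root $\mcG_{9,8}$: the left child is $\onebox$, which is verified, and the right child represents the non-empty permutations $\pi = \alpha n \beta$ obtained by inserting the topmost point. Avoiding $321$ through $n$ forces $\beta \in \incr$ (an inversion in $\beta$ together with $n$ is a $321$), and avoiding $2341$ through $n$ then restricts how low the points of $\beta$ may lie relative to $\alpha$. Drawing the resulting separating line with (PSb), and performing one further round of (PSa)/(PSb), I expect to reach a leaf whose grid class is a copy of the class $\mcG_{9,8}$ together with two inserted points and a pair of adjacent cells in which an increasing and a decreasing run are merged — the configuration whose grid class is enumerated by $\frac{1}{1-2x}$, exactly as in the right-hand cover of $\mcE_{3,2}$. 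This leaf is not a subset of $\av(321, 2341)$, since the recursive $\mcG_{9,8}$ block together with the two new points can spell out a fresh $321$ or $2341$, which is precisely why (PSa) and (PSb) cannot verify it.

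At this leaf I would invoke (PSc): take $v'$ to be the single cell carrying the $\mcG_{9,8}$ block, so that deleting the other cells returns the root, which is the ancestor $u$ (condition (1)), and verify condition (2): any completion, by points placed in the monotone cells and point cells of the leaf, of a valid placement in $\mcG_{9,8}$ again lies in $\av(321, 2341)$. This amounts to checking that no occurrence of $321$ or $2341$ can be created using the freshly inserted increasing and decreasing runs and the two extra points, together with the points already present in the recursive block; it follows from the monotonicity of the new cells and from the separating line drawn by (PSb) earlier. Declaring the leaf verified makes every leaf of the tree verified, so by the superset property the root is the disjoint union of the leaves, and reading the tree off — with the dotted back-edge contributing the recursive term — yields the functional equation
\[
  G_{9,8}(x) = 1 + x\,G_{9,8}(x) + \frac{x^2\,G_{9,8}(x)}{1-2x},
\]
which is exactly the equation satisfied by $G_{9,1}(x)$ in Proposition~\ref{321_3142_and_132_3412}; hence $G_{9,8}(x) = G_{9,1}(x) = \frac{1-2x}{1-3x+x^2}$, and the same tree and equation are obtained for $\mcG_{9,9}$.

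The main obstacle is verifying condition (2) of (PSc): one must show that enlarging the two new monotone runs and the two new points in every possible way never introduces a $321$ or a $2341$ once a valid configuration of the recursive $\mcG_{9,8}$ block is fixed. This is the only genuinely new content — everything else is the routine machinery of (PSa), (PSb), and reading a functional equation off a tree — and the care lies in pinning down exactly which cells the new points may occupy after the earlier separating line has been drawn, so that the pattern-avoidance check is exhaustive.
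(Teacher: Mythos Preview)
Your outline has the right shape but two concrete errors that make the proposed tree invalid. First, after one application of (PSa) you arrive at the $3\times 2$ node with $\mcG_{9,8}$ and $\mcI$ sharing a row beneath the topmost point; no version of (PSb) applies here, because an inversion (or non-inversion) between these two cells together with the point produces only $231$ or $132$, neither of which is in the basis. Second, and more seriously, your use of (PSc) with $v'$ equal to the single $\mcG_{9,8}$ cell and $u$ equal to the root fails condition~(2): after the second (PSa) the $\mcG_{9,8}$ cell still shares a column with an $\mcI$ cell lying above it in value, and placing a point of that $\mcI$ to the left of a descent inside the $\mcG_{9,8}$ block (say $21$) manufactures a $321$. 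So the leaf cannot be declared verified by pointing back to the root, and the functional equation $G_{9,8}(x)=1+xG_{9,8}(x)+\frac{x^2G_{9,8}(x)}{1-2x}$, while it happens to have the right solution, does not arise from this tree.

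The paper resolves this by going one level deeper: a third (PSa) is applied to the offending $\mcI$ cell (rightmost point), then (PSb) separates what remains, and only then is (PSc) invoked, with the ancestor $u$ being the $3\times 2$ node rather than the root. The subset $v'$ consists of three cells (the $\mcG_{9,8}$ block, the original topmost point, and one $\mcI$), so the recursive term is $(G_{9,8}(x)-1)$ rather than $G_{9,8}(x)$, and the resulting equation has four terms,
\[
G_{9,8}(x)=1+xG_{9,8}(x)+x^2 I(x)G_{9,8}(x)+x^2 I(x)^2\bigl(G_{9,8}(x)-1\bigr),
\]
which of course solves to the same $\frac{1-2x}{1-3x+x^2}$. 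Incidentally, the ``increasing and decreasing'' merged runs you mention cannot occur here: every monotone cell in a $321$-avoiding grid is increasing.
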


\begin{figure}[hbtp]
    \centering
      \centering
      \begin{tikzpicture}[scale = 0.75, vertex/.style = {}, auto]

        \node [vertex] (root) 	 at (0,0)     {\strule{\struleds}{1/1}[0/0/$\mcG_{9,8}$]};
        \node [vertex] (empty)  at (-1,-2)   {\onebox};
        \node [vertex] (nempty) at (1,-2)    {\strule{\struleds}{3/2}[2/0/$\mcI$, 0/0/$\mcG_{9,8}$, 1/1/$\bullet$]};
        \node [vertex] (nemptyl) at (-1,-5)    {\strule{\struleds}{2/2}[0/0/$\mcG_{9,8}$, 1/1/$\bullet$]};
        \node [vertex] (nemptyr) at (2,-5)    {\strule{\struleds}{4/4}[3/2/$\mcI$, 2/1/$\bullet$, 1/3/$\bullet$, 0/0/$\mcG_{9,8}$, 0/2/$\mcI$]};
        \node [vertex] (nemptyrl) at (0,-9)    {\strule{\struleds}{4/4}[3/2/$\mcI$, 2/1/$\bullet$, 1/3/$\bullet$, 0/0/$\mcG_{9,8}$]};
        \node [vertex] (nemptyrr) at (5,-9)    {\strule{\struleds}{7/6}[5/2/$\mcI$, 6/4/$\mcI$, 4/1/$\bullet$, 3/5/$\bullet$, 2/0/$\mcI$, 1/3/$\bullet$, 0/0/$\mcG_{9,8}$]};

       \draw [-,semithick] (root) to (empty);
       \draw [-,semithick] (root) to (nempty);
       \draw [-,semithick] (nempty) to (nemptyl);
       \draw [-,semithick] (nempty) to (nemptyr);
       \draw [-,semithick] (nemptyr) to (nemptyrl);
       \draw [-,semithick] (nemptyr) to (nemptyrr);

       \path[->,dashed,semithick]  (nempty)  edge   [bend left]   node {(PSc)} (nemptyrr);

       \begin{scope}[xshift=10cm]
           \node [vertex] (root) 	 at (0,0)     {\strule{\struleds}{1/1}[0/0/$\mcG_{9,9}$]};
           \node [vertex] (empty)  at (-1,-2)   {\onebox};
           \node [vertex] (nempty) at (1,-2)    {\strule{\struleds}{3/2}[2/0/$\mcI$, 0/0/$\mcG_{9,9}$, 1/1/$\bullet$]};
           \node [vertex] (nemptyl) at (-1,-5)    {\strule{\struleds}{2/2}[0/0/$\mcG_{9,9}$, 1/1/$\bullet$]};
           \node [vertex] (nemptyr) at (2,-5)    {\strule{\struleds}{4/4}[3/1/$\bullet$, 2/0/$\mcI$, 1/3/$\bullet$, 0/0/$\mcG_{9,9}$, 0/2/$\mcI$]};
           \node [vertex] (nemptyrl) at (0,-9)    {\strule{\struleds}{4/3}[3/1/$\bullet$, 2/0/$\mcI$, 1/2/$\bullet$, 0/0/$\mcG_{9,9}$]};
           \node [vertex] (nemptyrr) at (5,-9)    {\strule{\struleds}{6/5}[5/1/$\bullet$, 4/4/$\bullet$, 3/3/$\mcI$, 2/0/$\mcI$, 1/2/$\bullet$, 0/0/$\mcG_{9,9}$]};

          \draw [-,semithick] (root) to (empty);
          \draw [-,semithick] (root) to (nempty);
          \draw [-,semithick] (nempty) to (nemptyl);
          \draw [-,semithick] (nempty) to (nemptyr);
          \draw [-,semithick] (nemptyr) to (nemptyrl);
          \draw [-,semithick] (nemptyr) to (nemptyrr);

          \path[->,dashed,semithick]  (nempty)  edge   [bend left]   node {(PSc)} (nemptyrr);
          \path[->,dashed,semithick]  (nempty)  edge   [bend right=55]   node [pos=0.8] {(PSc)} (nemptyrl);
       \end{scope}

        \end{tikzpicture}

      %
      %
      %
      %
      %
    \caption{The structures of $\mcG_{9,8} = \av(321, 2341)$ and $\mcG_{9,9} = \av(321, 3412)$.}
    \label{fig:321_2341+321_3412}
\end{figure}
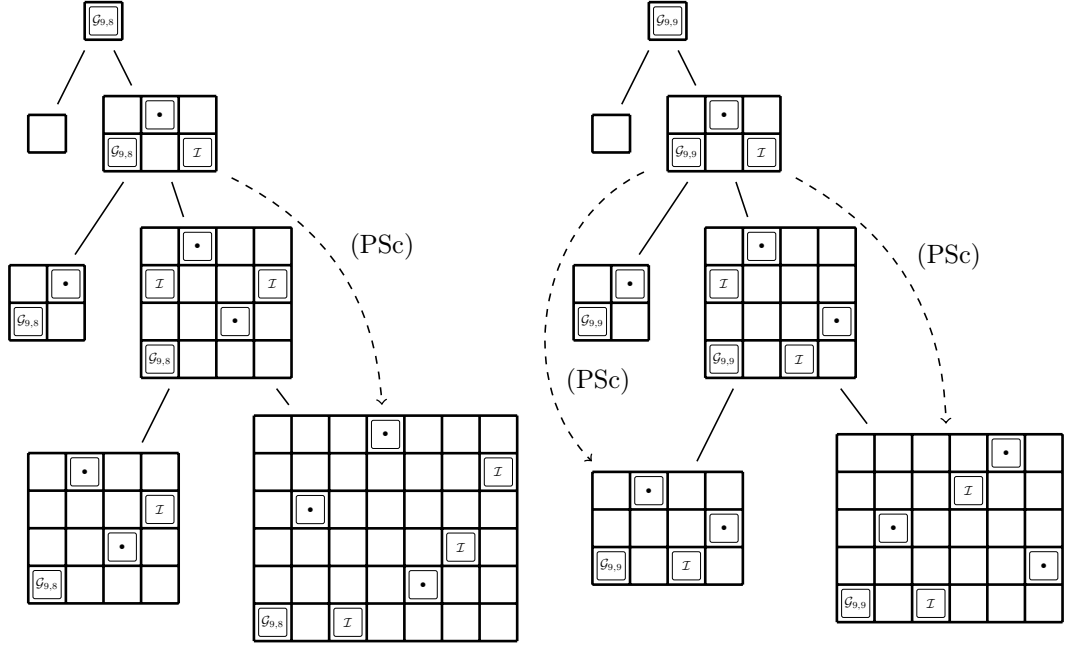

\begin{proof}
  The root node is $\mcG_{9,8}$. We first use (PSa) on the root node, choosing
  the topmost point. The left child is verified. On the right child, we use (PSa)
  on the cell containing $\mcI$, choosing the leftmost point. The left child is
  verified. On the right child, we use (PSa) on the node containing $\mcI$ in the
  far left column, choosing the rightmost. The left child is verified. On the
  right child, we use (PSb) on the two cells in the rightmost column containing
  $\mcI$. We then use (PSc) from the right child of the root vertex to the
  current node. All leaves are now verified. We now have the left proof tree in
  Figure~\ref{fig:321_2341+321_3412}. From this proof tree, we obtain
  \[
  G_{9,8}(x) = 1 + x G_{9,8}(x) + x^2 D(x) G_{9,8}(x) + x^2 D(x)^2
  (G_{9,8}(x)-1).
  \]
  Solving gives the claimed equation. From the proof tree on the right in
  Figure~\ref{fig:321_2341+321_3412}, we obtain
  \[
  G_{9,9}(x) = 1 + x G_{9,9}(x) + x ( G_{9,9}(x) - 1 ) + x^2 D(x)
  (G_{9,9}(x)-1).
  \]
  Solving gives the claimed equation.
\end{proof}

Therefore with the addition of (PSc), all bases with one length three and one
length four pattern have a proof tree.

\subsection{Patterns of length three - revisited}

In Section~\ref{sec:patterns_of_length_three} we found covers for all bases with
patterns of length three, except $\{123\}$. There does exist a proof tree for
$\av(123)$.

\begin{proposition}
  \label{prop:123}
  The structure of the permutation class $\mcA_2 = \av(123)$ is given by the
  proof tree in Figure~\ref{fig:123}.
\end{proposition}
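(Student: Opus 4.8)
The plan is to build the proof tree of Figure~\ref{fig:123} by exactly the recipe used for $\mcG_{9,8}$ in Proposition~\ref{prop:321_2341+321_3412}: peel extreme points with (PSa), separate cells with (PSb), and finally close a recursive loop with (PSc). First I would put $\mcA_2 = \av(123)$ at the root and apply (PSa), choosing the topmost point. The left child is the empty permutation and is immediately verified. On the right child a non-empty $\pi \in \av(123)$ is written as $\alpha n \beta$ with $n$ the maximum; since $\alpha_i \alpha_j n$ would be an occurrence of $123$ for any ascent of $\alpha$, the inference clause of (PSa) forces the cell to the left of $n$ to be $\decr = \av(12)$, while the cell to the right of and below $n$ remains $\av(123)$.

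The essential point — and the reason a finite \struct{} cover fails here, just as for $\mcG_{9,8}$ and $\mcG_{9,9}$ — is the cross-constraint: $\alpha$ and $\beta$ are not independent, because an element of $\alpha$ lying below an ascent of $\beta$ produces a $123$. To expose this I would apply (PSa) again on the $\av(123)$ cell, once more choosing its topmost point, writing $\beta = \gamma m \delta$ with $\gamma \in \decr$ and $\delta \in \av(123)$; inferring as much as possible about the resulting grid rule forces the cell of $\gamma$ to lie above the cell of $\alpha$ whenever $\gamma$ is non-empty, and (PSb) then records the corresponding separation between the cell of $\alpha$ and the point $m$. Iterating (PSa) and (PSb) a bounded number of times one reaches a leaf whose non-trivial skeleton — the cells that remain after deleting all the peeled-off decreasing runs and points — is a copy of an earlier vertex $u$ (the right child of the root), and in which every completion of a legal placement on $u$ by arbitrary decreasing runs over the deleted cells stays in $\av(123)$, precisely because those runs have been separated so that they cannot supply the ``$1$'' or the ``$3$'' of a forbidden $123$ together with the recursive block. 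This is exactly the configuration (PSc) is designed for, so I would apply (PSc) from $u$ to this leaf and declare it verified.

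At that point every leaf of the tree is verified (the empty permutations, together with the single (PSc)-closed recursive leaf), so by the superset property the tree expresses $\av(123)$ as an exact disjoint union. Reading the contributions off the leaves gives a functional equation for the generating function $A_2(x)$ involving $D(x)$ for the decreasing runs; substituting $D(x) = \frac{1}{1-x}$ from the enumeration of $\decr$ and solving the resulting quadratic yields $A_2(x) = \frac{1 - \sqrt{1-4x}}{2x}$, so $\av(123)$ is Catalan, in agreement with Figure~\ref{fig:231}. (By the reverse symmetry of the pattern $123$ one could instead peel the minimum or the leftmost point; one picks whichever choice makes the (PSc) loop close with the fewest (PSa) steps.)

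The delicate step is verifying condition~(2) of (PSc): that every completion of a legal placement on the ancestor $u$, pulled back along the branch and extended arbitrarily over the peeled cells, really lands in $\av(123)$. This rests entirely on having made the correct separations via (PSb) and the inference clause of (PSa) along the branch — that is, on choosing which extreme point to peel, and in which order, so that no decreasing run can ever combine with two increasing elements of the recursive $\av(123)$ block to form a $123$. Designing that branch correctly is the real obstacle; once it is right, checking the superset property and solving the functional equation are routine.
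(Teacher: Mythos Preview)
Your overall recipe --- peel with (PSa), separate with (PSb), close the recursion with (PSc), mimicking Proposition~\ref{prop:321_2341+321_3412} --- is exactly what the paper intends; it omits the proof and points back to that argument. But the tree you describe is not the one in Figure~\ref{fig:123}. There the first (PSa) inserts the \emph{bottommost} point, producing the node with $\mcA_2$ on the left, the point below, and $\mcD$ on the right; the second (PSa) is then applied to the $\mcD$ cell (leftmost point), not to the recursive $\mcA_2$ cell. Even after the reverse--complement symmetry you invoke, peeling the recursive cell at the second step is a different branching, and your stated inference ``the cell of $\gamma$ lies above the cell of $\alpha$'' is reversed: if $\alpha_i<\gamma_j$ then $\alpha_i\,\gamma_j\,m$ is an occurrence of $123$, so it is $\alpha$ that must lie above $\gamma$.

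The more serious gap is your final paragraph. You assert that the leaves give a functional equation which is a quadratic in $A_2(x)$ and solves to the Catalan generating function. This is false for the tree in Figure~\ref{fig:123}: in the (PSc) leaf the subset $v'$ realising the ancestor shares a column with one of the remaining $\mcD$ cells (column~$2$ carries both the $\mcD$ belonging to $v'$ and an ``extra'' $\mcD$), so the contribution is not a product of the ancestor's generating function with that of the extras --- the horizontal interleavings introduce binomial weights, and one obtains a genuine recurrence rather than a closed algebraic equation in $A_2(x)$. The paper says this explicitly immediately after the proposition: ``It is not trivial to show that this enumeration is equal to the Catalan numbers.'' The tree verifies the structural decomposition that the proposition claims; the Catalan identity is a separate matter and should not be folded into the proof.
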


\begin{figure}[hbt]

        \centering
        \begin{tikzpicture}[scale = 0.75, vertex/.style = {}, auto]

          \node [vertex] (root) 	 at (0,0)     {\strule{\struleds}{1/1}[0/0/$\mcA_2$]};
          \node [vertex] (empty)  at (-1,-2)   {\onebox};
          \node [vertex] (nempty) at (1,-2)    {\strule{\struleds}{3/2}[0/1/$\mcA_2$, 1/0/$\bullet$, 2/1/$\mcD$]};
          \node [vertex] (nemptyl) at (-1,-5)    {\strule{\struleds}{2/2}[0/1/$\mcA_2$, 1/0/$\bullet$]};
          \node [vertex] (nemptyr) at (2,-5)    {\strule{\struleds}{4/4}[0/1/$\mcD$, 0/3/$\mcA_2$, 1/0/$\bullet$, 2/2/$\bullet$, 3/1/$\mcD$]};
          \node [vertex] (nemptyrl) at (0,-9)    {\strule{\struleds}{4/4}[0/3/$\mcA_2$, 1/0/$\bullet$, 2/2/$\bullet$, 3/1/$\mcD$]};
          \node [vertex] (nemptyrr) at (5,-9)    {\strule{\struleds}{7/6}[0/5/$\mcA_2$, 1/2/$\bullet$, 2/1/$\mcD$, 2/5/$\mcD$, 3/0/$\bullet$, 4/4/$\bullet$, 5/3/$\mcD$, 6/1/$\mcD$]};

         \draw [-,semithick] (root) to (empty);
         \draw [-,semithick] (root) to (nempty);
         \draw [-,semithick] (nempty) to (nemptyl);
         \draw [-,semithick] (nempty) to (nemptyr);
         \draw [-,semithick] (nemptyr) to (nemptyrl);
         \draw [-,semithick] (nemptyr) to (nemptyrr);

         \path[->,dashed,semithick]  (nempty)  edge   [bend left]   node {(PSc)} (nemptyrr);

       \end{tikzpicture}
    \caption{The structure of $\mcA_2 = \av(123)$.}
    \label{fig:123}
\end{figure}

We omit the proof as it follows similar arguments as in the previous
proposition. From the proof tree in Figure~\ref{fig:123}, it is easy to
derive a recurrence for enumerating $\av(123)$. It is not trivial to show that
this enumeration is equal to the Catalan numbers. We are therefore still lacking
more powerful algorithms for a fully automatic Wilf-classification of all bases
in $\mcS_3$.

\section{Conclusion}
  \label{sec:conclusion}

It is decidable whether a basis defines a polynomial permutation class. The same is
true for regular insertion encodable permutation classes. Given that polynomial
permutation classes have covers, it is natural to ask the following.

\begin{question}
  Given a basis $B$, is it decidable whether a cover exists for the permutation
  class $\av(B)$? Moreover, is there a bound on the size of the rules required?
\end{question}

\noindent
There is no known bound on the size of the peg permutations (and therefore
\struct{} rules) for polynomial permutation classes, suggesting that this
question might be hard to answer. In Section~\ref{sec:further_improvements} we
showed that it is easy to verify if a \struct{} rule is a subset of a
permutation class $\mcC$.

\begin{question}
  Can you verify when two rules are disjoint?
\end{question}

Given this, we would then be able to show that a \struct{} cover is a subset of
the permutation class. Of course, the goal is to answer the following.

\begin{question}
  Given a finite set of \struct{} rules, is it decidable if these form a cover
  for a permutation class?
\end{question}

This is a natural question to ask, and the proof trees discussed in
Section~\ref{sec:further_improvements} go some way to answering this, however, it
is not clear that all covers can be made into a proof tree argument.

  \bibliographystyle{plainnat}
  \bibliography{references}

\end{document}